\documentclass[11pt]{amsart}
\usepackage{longtable}
\usepackage{array}
\usepackage{multicol}
\usepackage{graphicx}
\usepackage[colorlinks=true,citecolor=black,linkcolor=black,urlcolor=blue]{hyperref}

\makeatletter
 \def\@textbottom{\vskip \z@ \@plus 1pt}
 \let\@texttop\relax
\makeatother
\usepackage{amsmath, amssymb, amsbsy, amsfonts, amsthm, latexsym, amsopn, amstext, amsxtra, euscript, amscd, color, mathrsfs}
\usepackage[normalem]{ulem}
\usepackage{soul}

\usepackage{cite}

\makeatletter

\@namedef{subjclassname@2020}{%
  \textup{2020} Mathematics Subject Classification}
\makeatother

\PassOptionsToPackage{hyphens}{url}\usepackage{hyperref}
 
 \usepackage[capbesideposition=outside,capbesidesep=quad]{floatrow}

\restylefloat{table}
\restylefloat{table}
         
\usepackage{multirow,caption}
            
\usepackage{amscd}
\usepackage{color,enumerate}

\newcommand{\RNum}[1]{\lowercase\expandafter{\romannumeral #1\relax}}

\usepackage[colorinlistoftodos,prependcaption,textsize=tiny]{todonotes}

\theoremstyle{plain}
\newtheorem{thm}{Theorem}[section]
\newtheorem{lem}[thm]{Lemma}

\newtheorem{prop}[thm]{Proposition}

\newtheorem{rmk}[thm]{Remark}

\newtheorem{thm-con}[thm]{Theorem-Conjecture}
\numberwithin{equation}{section}

\theoremstyle{definition}

\newcommand{\F}{\mathbb F}

\def\Tr{{\rm Tr}}

\begin{document}
\title[Permutation polynomials from the trace functions]{Permutation polynomials from the trace functions}
\author[S. U. Hasan]{Sartaj Ul Hasan}
 \address{Department of Mathematics, Indian Institute of Technology Jammu, Jammu 181221, India}
  \email{sartaj.hasan@iitjammu.ac.in}
  
  \author[R. Kaur]{Ramandeep Kaur}
  \address{Department of Mathematics, Indian Institute of Technology Jammu, Jammu 181221, India}
  \email{2022rma0027@iitjammu.ac.in}
  \author[H. Kumar]{Hridesh Kumar}
\address{Department of Mathematics, Indian Institute of Technology Jammu, Jammu 181221, India}
\email{2021rma2022@iitjammu.ac.in}
\keywords{Finite fields, permutation polynomials, compositional inverses, trace function.}

\subjclass[2020]{12E20, 11T06, 11T55}
\thanks{ The second and third named authors are supported by the Prime Minister’s Research Fellowship, under PMRF IDs 3003658 and 3002900, respectively, at IIT Jammu.}
\begin{abstract}

We study necessary and sufficient conditions on $\gamma$ for several classes of polynomials of the form $X+\gamma \operatorname{Tr}_{q}^{q^n}(h(X))$ to be permutation polynomials over finite field $\mathbb{F}_{q^n}$, where $q$ is a prime power, $n$ is a positive integer, and $\operatorname{Tr}_{q}^{q^n}(\cdot)$ denotes the relative trace function from $\mathbb{F}_{q^n}$ to $\mathbb{F}_q$. In addition, we completely characterize the permutation polynomials of the form 
\(
X+\gamma\operatorname{Tr}_{q}^{q^n}(h(X))
\)
over $\mathbb{F}_{q^n}$, with their compositional inverses, where
\(
h(X)=c_1X+c_2X^2+X^2\operatorname{Tr}_{q}^{q^n}(X),
\)
$c_1,c_2\in\mathbb{F}_q.$

\end{abstract}
\maketitle
\section{Introduction}
For a prime power $q=p^m$, where $m$ is a positive integer, let $\F_q$ denote the finite field with $q$ elements, $\F_q^*$  the multiplicative cyclic group of all non-zero elements of $\F_q$, and $\F_q[X]$ the ring of polynomials over $\F_q$ in the indeterminate $X$. It is well-known that every mapping from $\F_{q}$ into itself can be uniquely represented by a polynomial in $\F_{q}[X]$ of degree less than $q$. Accordingly, we may use the terms function and polynomial interchangeably. A polynomial $f(X)\in \F_q[X]$ is a permutation polynomial (PP) if the induced map $c \mapsto f(c)$ is a bijection from $\F_q$ to itself. The study of permutation polynomials was started by Hermite~\cite{CC} over finite fields of prime order. Later, Dickson~\cite{LE} investigated PPs over arbitrary finite fields. For any PP $f(X)\in \F_q[X]$, there exists a unique  polynomial $f^{-1}(X)\in \F_q[X]$ modulo $X^q-X$ such that $f^{-1}(f(X))\equiv f(f^{-1}(X))\equiv X \pmod{X^q-X}$. The polynomial  $f^{-1}$ is known as the compositional inverse of $f$ over $\F_q$. In 1991, Mullen~\cite{M} proposed the problem of computing the coefficients of the compositional inverses of permutation polynomials. Permutation polynomials and their compositional inverses have a wide range of applications in cryptography\cite{RSA,Schwenk_Cr_98}, coding theory \cite{DH,Chapuy_C_07}, and combinatorial design theory ~\cite{DY}.  In particular, in a block cipher with a Substitution-Permutation Network (SPN) structure,  a permutation is often used as an S-box to build the confusion layer during the encryption process, and the compositional inverse is needed while decrypting the cipher. Due to numerous applications in several areas, an extensive research has been devoted to examining various forms of permutation polynomials over finite fields. For more details on permutation polynomials and their compositional inverses see \cite{CK, JLQ, SRK, Hou_PP_15, W} and references therein.

In 2008, Charpin and Kyureghyan \cite{CK} studied permutation polynomials of the form $$G(X)+\gamma \Tr_{2}^{2^m}(H(X))$$ over $\F_{2^m}$ using the linear structure of Boolean functions, where $\Tr_{2}^{2^m}(X)$ is a relative trace. By considering $G(X)$ is a PP or a linearized polynomial, authors derived six classes of such permutation polynomials over $\F_{2^m}$ and later extended this work to finite fields of odd characterstic \cite{CK1}. Kyureghyan and Zieve \cite{KZ}, in 2016, investigated all permutation polynomials of the type $X+\gamma \Tr_{q}^{q^n}(X^k)$, where $k$ is a positive integer, $\gamma \in \F_{q^n}^*$, $q$ odd, $n>1$ and $q^n<5000$ and generalized them into nine infinite classes except five examples. Later, Ma and Ge \cite{MG} generalized two of these examples to a new infinite class. Motivated by their work, Li, Qu, Chen and Li \cite{LQCL} studied polynomials of the form $cX+\Tr_{q}^{q^n}(X^k)$, where $q=2^m$, $c\in \F_{q^n}^*$, $m>1$ and $mn<14$. In 2019, Zha, Hu and Zhang \cite{ZHZ} constructed several classes of permutation polynomials of the form $X+\gamma \Tr_{q}^{q^n}(h(X))$ over $\F_{q^n}$, which explained the remaining examples of Kyureghyan and Zieve \cite{KZ} and one of Li et al. ~\cite{LQCL}. Then authors \cite {ZHZ} investigated permutation polynomials of such forms for $n \in \{2,3,4\}.$

Recently, Jiang, Li, Qu\cite{ JLQ} and Jiang, Yuan, Li, Qu \cite{JYLQ} constructed permutation polynomials of the form $X+\gamma \Tr_{q}^{q^n}(h(X))$ for $n=2,3$, where $q=2^m$.  Then Pang, Wu and Yuan \cite {PWY} studied permutation polynomials of the form $L(X)+\gamma \Tr_{q}^{q^3}(h(X))$ over $\F_{q^3}$ with even characteristics. Moreover, they gave the necessary and sufficient conditions on $\gamma \in \F_q$ for the permutation polynomials given by Jiang, Li and Qu \cite{JLQ}.  Recently, Singh, Kumar and Prakash \cite{SKP} computed the compositional inverses of the permutation polynomials proposed in \cite{JYLQ}.

Determining necessary and sufficient conditions on $\gamma$ for polynomials of the form
\[
X+\gamma \operatorname{Tr}_{q}^{q^n}(h(X))
\]
to be permutation polynomials over $\mathbb{F}_{q^n}$ is a challenging problem. Although numerous permutation polynomials of this form have been constructed, only a limited number of classes have been studied for which necessary and sufficient conditions on $\gamma$ are known. Motivated by this problem, we investigate several classes of polynomials of the form
\(
X+\gamma \operatorname{Tr}_{q}^{q^n}(h(X))
\)
and determine necessary and sufficient conditions on $\gamma$ for them to be permutation polynomials over $\mathbb{F}_{q^n}$ for some $n$.

 
Permutation polynomials and their compositional inverses have attracted considerable attention due to their numerous applications. Nevertheless, determining an explicit expression for the compositional inverse of a permutation polynomial is, in general, a hard problem. In this paper, we completely characterize the permutation polynomials of the form
\(
X+\gamma\operatorname{Tr}_{q}^{q^n}(h(X))
\)
over $\mathbb{F}_{q^n}$, where
\[
h(X)=c_1X+c_2X^2+X^2\operatorname{Tr}_{q}^{q^n}(X),
\qquad c_1,c_2\in\mathbb{F}_q.
\]
Furthermore, we derive explicit polynomial expressions for their compositional inverses.

The remainder of the paper is organized as follows. Section \ref{S2} introduces some known results. In Section \ref{S3}, we study the permutation polynomials of the form $X+\gamma\operatorname{Tr}_{q}^{q^n}(h(X))$ for $n=2, 3.$ Section \ref{S4} investigates the permutation polynomials and their compositional inverses over $\F_{q^n}.$

\section{Preliminaries}\label{S2}
This section collects the known results which we further study in the subsequent sections.
\begin{lem}\label{L21}\cite[Theorem 10]{ZHZ}
Let $n$ and $m$ be positive integers, $q=p^m$, and
$\gamma\in\F_q^{*}$. The polynomial
\(
f(X)=X+\gamma\Tr_{q}^{q^n}\!\left(X^2-X^{q+1}\right)
\)
permutes $\F_{q^n}$.
\end{lem}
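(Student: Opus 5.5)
The plan is a direct injectivity argument. It rests on one observation: the map $H(x)=\Tr_{q}^{q^n}\!\left(x^2-x^{q+1}\right)$ is invariant under translation by elements of $\F_q$. Since $\gamma\in\F_q^*$, the perturbation $\gamma H(X)$ moves points only within cosets of $\F_q$, and on each such coset $H$ is constant. Because $\F_{q^n}$ is finite, injectivity of $f$ is enough.

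\emph{Step 1 (translation invariance).} For $x\in\F_{q^n}$ and $a\in\F_q$, use $a^q=a$ to expand
\[
(x+a)^2-(x+a)^{q+1}=(x+a)^2-(x^q+a)(x+a)=x^2-x^{q+1}+a\,x-a\,x^q .
\]
Apply $\Tr_{q}^{q^n}$. It is $\F_q$-linear and satisfies $\Tr_{q}^{q^n}(x^q)=\Tr_{q}^{q^n}(x)$, so the extra term $a\left(\Tr_{q}^{q^n}(x)-\Tr_{q}^{q^n}(x^q)\right)$ vanishes. Hence $H(x+a)=H(x)$ for all $a\in\F_q$.

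\emph{Step 2 (injectivity).} Suppose $f(x)=f(y)$ with $x,y\in\F_{q^n}$. Then $x-y=\gamma\left(H(y)-H(x)\right)$. Both $\gamma$ and the values of $H$ lie in $\F_q$, so $a:=y-x\in\F_q$. By Step 1, $H(y)=H(x+a)=H(x)$, so $x-y=\gamma\cdot 0=0$. Thus $f$ is injective on $\F_{q^n}$, and hence a bijection.

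The only step needing any care is Step 1: it requires $a^q=a$ so that $(x+a)^q=x^q+a$, and the vanishing of $\Tr_{q}^{q^n}(x-x^q)$. Both are standard, and neither depends on the characteristic or on $n$. In particular the case $n=1$ is consistent, since then $x^2-x^{q+1}=0$ on $\F_q$.
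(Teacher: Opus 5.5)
The paper gives no proof of this lemma (it is quoted from \cite{ZHZ}), but your argument is correct and complete. It rests on the same cancellation the paper uses in the case $\gamma\in\F_q$ of Theorem~\ref{T31}: there, substituting $X=a+\gamma u$ makes all the $u$-dependence coming from inside the trace in \eqref{e37} vanish, leaving $u+\Tr_q^{q^3}(a^2+a^{q+1})=0$, which is your Step~1 specialized to $p=2$, $n=3$.

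You package the cancellation as injectivity via $\F_q$-translation invariance of $H$, whereas the paper packages it as unique solvability of $f(X)=a$. Your phrasing has a small bonus: since $f(x)-x\in\F_q$, Step~1 gives $H(f(x))=H(x)$, and hence the explicit inverse $f^{-1}(X)=X-\gamma\Tr_q^{q^n}(X^2-X^{q+1})$ at once.
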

\begin{lem}\label{L22}\cite[Theorem 4.1]{JYLQ}
Let $q=2^m$, where $m$ is a positive integer, and $f(X)=X+ \Tr_{q}^{q^2}(X+X^2+X^{2q-1}) \in \F_{q^2}[X]$. Then $f(X)$ is a permutation polynomial over $\F_{q^2}$. 
\end{lem}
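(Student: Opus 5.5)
The plan is to prove Lemma~\ref{L22} by reducing bijectivity to a one-variable equation over $\F_q$. Write $T=\Tr_{q}^{q^2}$ and $h(X)=X+X^2+X^{2q-1}$, so $f(x)=x+T(h(x))$ with $T(h(x))\in\F_q$. Then $f(x)=y$ forces $x=y+c$ for some $c\in\F_q$. Hence $f$ permutes $\F_{q^2}$ if and only if, for every $y\in\F_{q^2}$, the equation
\[
c=T(h(y+c))
\]
has exactly one solution $c\in\F_q$.

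\emph{Step 1: simplify $T(h(y+c))$.} Fix $y$ and put $s=y+y^q$ and $n=y^{q+1}$, both in $\F_q$. Let $x=y+c$ and $\bar x=x^q=y^q+c$. Since we are in characteristic $2$, we get $x+\bar x=s$ and $x^2+\bar x^2=s^2$, independently of $c$. For $x\neq 0$ we have $x^{2q-1}=\bar x^2/x$, so
\[
T(x^{2q-1})=\frac{x^3+\bar x^3}{x\bar x}=\frac{s(s^2+N)}{N}=s+\frac{s^3}{N},\qquad N=x\bar x=n+cs+c^2 .
\]
Adding the three contributions gives $T(h(x))=s^2+s^3/N$ whenever $x\neq0$; at $x=0$ the value is $0$.

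\emph{Step 2: the case $s=0$.} Here $T(h(x))=0$ (also at $x=0$), so the equation reads $c=0$, and $c=0$ is the unique solution.

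\emph{Step 3: the case $s\neq0$.} Now $y\notin\F_q$, so $x=y+c\notin\F_q$ and $N\neq0$. The polynomial $X^2+sX+n$ is irreducible over $\F_q$. Setting $k=n/s^2$, this means $\Tr_2^q(k)=1$. Substitute $c=su$ with $u\in\F_q$; then $N=s^2(k+u+u^2)$ and the equation becomes
\[
u=1+\frac{1}{u^2+u+k},
\]
that is, $(u+1)(u^2+u+k)=1$. Clearing denominators is harmless because $u^2+u+k\neq0$ for all $u\in\F_q$. Expanding in characteristic $2$ gives the cubic
\[
P(u)=u^3+(k+1)u+(k+1)=0 .
\]
The task is therefore to show that $P$ has exactly one root in $\F_q$ whenever $\Tr_2^q(k)=1$.

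\emph{Step 4: root count of the cubic.} This is the main obstacle, and I would handle it with the classical criterion (Berlekamp--Rumsey--Solomon / Williams) for cubics over $\F_{2^m}$. For $b\neq0$, the cubic $u^3+au+b$ has exactly one root in $\F_{2^m}$ if and only if $\Tr_2^{q}(a^3/b^2)\neq\Tr_2^q(1)$.

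If $k\neq1$, take $a=b=k+1$. Then $a^3/b^2=k+1$, and
\[
\Tr_2^q(k+1)=1+\Tr_2^q(1)\neq\Tr_2^q(1),
\]
so $P$ has a unique root.

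If $k=1$, then $P(u)=u^3$, whose only root is $u=0$.

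In every case the equation $c=T(h(y+c))$ has exactly one solution $c\in\F_q$, so $f$ is a bijection. If one prefers not to quote the criterion, one can argue directly. Since $(u+1)(u^2+u+k)+1$ has no repeated roots when $k\ne1$, the number of roots is $0$, $1$ or $3$. One then rules out $0$ and $3$ by the Galois-theoretic trace argument underlying that criterion.
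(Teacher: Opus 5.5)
\textbf{Gap in Step 3.} Steps 1 and 2 are correct. In particular, $T(h(x))=s^2+s^3/N$ for $x\neq 0$ agrees with the paper's equation \eqref{e317} at $\gamma=1$. Step 3, however, contains an algebra slip that breaks the argument for most $y$.

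From $c=s^2+s^3/N$ with $c=su$ and $N=s^2(u^2+u+k)$ you get $su=s^2+s/(u^2+u+k)$. Dividing by $s$ gives
\[
u=s+\frac{1}{u^2+u+k},\qquad\text{i.e.}\qquad (u+s)(u^2+u+k)=1,
\]
and not $u=1+1/(u^2+u+k)$. The contribution $T(X^2)=s^2$ has a different degree in $s$ from the other terms, so $s$ does not cancel.

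The correct cubic is therefore $u^3+(s+1)u^2+(k+s)u+(sk+1)$. It has a quadratic term whenever $s\neq 1$, so the depressed-cubic criterion of Step 4 does not apply to it as written. Your argument covers only those $y$ with $\Tr_{q}^{q^2}(y)\in\{0,1\}$. As a consistency check, putting $\gamma=1$ and $u=sv$ in \eqref{e317} gives exactly $(v+s)(v^2+v+k)+1=0$.

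\textbf{Repair.} Substitute $w=u+s+1$. Then $u+s=w+1$ and $u^2+u+k=w^2+w+K$ with $K=k+s^2+s$, so the equation becomes $(w+1)(w^2+w+K)=1$, that is,
\[
w^3+(K+1)w+(K+1)=0.
\]
Moreover $\Tr_2^q(K)=\Tr_2^q(k)+\Tr_2^q(s^2+s)=\Tr_2^q(k)=1$. Your Step 4 now goes through verbatim with $K$ in place of $k$: the Berlekamp--Rumsey--Solomon/Williams criterion handles $K\neq 1$, and for $K=1$ the equation is $w^3=0$. With this correction the proof is complete.

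\textbf{Comparison with the paper.} The paper does not prove this lemma; it quotes it from \cite{JYLQ}. Its reduction in the proof of Theorem~\ref{T32} (substitute $X=a+\gamma u$ and obtain a cubic in $u$) is essentially the same as yours. Your trace-fibre reduction plus the classical root-count criterion for cubics supplies the sufficiency argument at $\gamma=1$ that the paper leaves to the citation.
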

\begin{lem}\label{L23}\cite[Theorem 4.2]{JYLQ}
Let $q = 2^m$ and $f(X)=X+\operatorname{Tr}_{q}^{q^2}\left(X+X^3+X^{q+2}+X^{2q-1}\right)$. Then $f(X)$ is a permutation polynomial of $\mathbb{F}_{q^2}$.
\end{lem}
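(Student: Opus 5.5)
The plan is to reduce the statement to a family of univariate permutation problems over $\F_q$, one for each coset of $\F_q$ in $\F_{q^2}$. Write $h(X)=X+X^3+X^{q+2}+X^{2q-1}$, and for $u\in\F_{q^2}$ write $\bar u=u^q$. Since $\Tr_q^{q^2}(h(u))\in\F_q$, the map $f$ sends each coset $c+\F_q$ into itself. Hence $f$ permutes $\F_{q^2}$ if and only if, for every $c\in\F_{q^2}$, the map
\[
\psi_c(t)=t+\Tr_q^{q^2}\bigl(h(c+t)\bigr)
\]
permutes $\F_q$. I would then compute $\psi_c$ explicitly, monomial by monomial, using $\bar t=t$ for $t\in\F_q$ and characteristic $2$. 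Throughout, $T=c+\bar c$ is fixed.

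The linear term gives $\Tr(c+t)=T$, a constant. For the cubic terms I expect
\[
\Tr\bigl((c+t)^3\bigr)-\Tr(c^3)=tT^2+t^2T=\Tr\bigl((c+t)^{q+2}\bigr)-\Tr(c^{q+2}).
\]
So these two contributions cancel in characteristic $2$, and only constants remain.

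If $c\in\F_q$, the coset is $\F_q$ itself. There $u^{2q-1}=u$ (with $0$ handled directly), so $\Tr(h(u))=\Tr(u+u^3+u^3+u)=0$ and $\psi_c$ is the identity.

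If $c\notin\F_q$, then $u=c+t\neq 0$ and $T\neq0$. Moreover $N:=u\bar u=t^2+Tt+c\bar c$ never vanishes on $\F_q$. The key computation is
\[
\Tr\bigl(u^{2q-1}\bigr)=\frac{\bar u^2}{u}+\frac{u^2}{\bar u}=\frac{(u+\bar u)(u^2+u\bar u+\bar u^2)}{u\bar u}=\frac{T(T^2+N)}{N}=T+\frac{T^3}{N}.
\]
Thus, up to an additive constant, $\psi_c(t)=t+T^3/(t^2+Tt+c\bar c)$. Substituting $t=Ts$ and setting $\delta=c\bar c/T^2$ gives $\psi_c(Ts)=T\,\phi(s)+\mathrm{const}$, where
\[
\phi(s)=s+\frac{1}{g(s)},\qquad g(s)=s^2+s+\delta .
\]
Here $g$ has no root in $\F_q$, i.e.\ $\Tr_2^{q}(\delta)=1$. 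So everything reduces to showing that $\phi$ permutes $\F_q$ whenever $\Tr_2^q(\delta)=1$.

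For the final step, suppose $\phi(s_1)=\phi(s_2)$ with $e=s_1+s_2\neq0$, and put $g_i=g(s_i)$. From $g_1+g_2=e^2+e$ I get $e=(g_1+g_2)/(g_1g_2)=e(e+1)/(g_1g_2)$, hence $g_1g_2=e+1$. This forces $e\neq1$, and $g_1$ satisfies
\[
g_1^2+(e^2+e)g_1+(e+1)=0 .
\]
Writing $g_1=(e^2+e)z$ turns this into
\[
z^2+z=\frac{1}{e^2(e+1)},
\]
while $g_1=s_1^2+s_1+\delta$ gives $\Tr_2^q\bigl((e^2+e)z\bigr)=\Tr_2^q(\delta)=1$.

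The main obstacle is to show that these two trace conditions are incompatible. I would first try to rewrite $\Tr_2^q((e^2+e)z)$ as the trace of an expression built from $z^2+z$ and $e$, using $\Tr(a^2)=\Tr(a)$, the identity $(e^2+e)z=e^2 z+ez$, and the relation for $z^2+z$; the aim is to show that this trace is always $0$. If that route stalls, the fallback is to count roots in $\F_q$ of the cubic $(s+d)g(s)=1$ and show that it has exactly one for each $d\in\F_q$.
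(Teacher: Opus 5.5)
The paper gives no proof of this lemma. It is quoted from \cite{JYLQ}, and for $\gamma=1$ the proof of Theorem~\ref{T33} simply invokes Lemma~\ref{L23}. The nearest computation in the paper is in that proof, where substituting $X=a+\gamma u$ leads to the cubic \eqref{e321}. At $\gamma=1$ that cubic is exactly your equation $\psi_a(u)=0$ multiplied by $u^2+Tu+a^{q+1}$; the additive constant you discarded is $\Tr_q^{q^2}(a^3+a^{q+2})=T^3$. The paper never shows that this cubic has a unique root in $\F_q$ for each $a$.

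Your route is therefore a genuinely self-contained proof. Working coset by coset and rescaling $t=Ts$ collapses the two-parameter family of cubics (in $T$ and $a^{q+1}$) to the one-parameter family $\phi(s)=s+1/(s^2+s+\delta)$ with $\Tr_2^q(\delta)=1$. The collision analysis via $g_1g_2=e+1$ then treats all such $\delta$ at once, independently of the parity of $m$. The paper's per-$a$ cubic is better suited to the converse direction of Theorem~\ref{T33}, where one chooses $a$ to kill the $u^2$ and $u$ coefficients, but it gives no direct handle on uniqueness.

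Your computations all check out. The only thing missing is the last step, which your first route settles in two lines once you also use that $z\in\F_q$. Since $\Tr_2^q(ez)=\Tr_2^q(e^2z^2)$,
\[
\Tr_2^q\bigl((e^2+e)z\bigr)=\Tr_2^q\bigl(e^2(z^2+z)\bigr)=\Tr_2^q\Bigl(\frac{1}{e+1}\Bigr).
\]
On the other hand, $z^2+z=\frac{1}{e^2(e+1)}$ is solvable in $\F_q$, and $\frac{1}{e^2(e+1)}=\frac1e+\frac1{e^2}+\frac1{e+1}$. Together these give $0=\Tr_2^q\bigl(\frac{1}{e^2(e+1)}\bigr)=\Tr_2^q\bigl(\frac{1}{e+1}\bigr)$. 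This contradicts $\Tr_2^q(g_1)=\Tr_2^q(\delta)=1$, so $\phi$ is injective and your fallback is unnecessary. The vanishing comes from $z$ lying in $\F_q$, not from the identity for $z^2+z$ alone.
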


    \begin{lem}\label{L24}\cite[Theorem 3.5]{LQCL}
Let $q = 2^{m}$ and
$
f(X) = \gamma X + \Tr_{q^{2n+1}/q}\!\left(X^{\frac{q^{2} + q}{2}}\right)$,
where $\gamma \in \mathbb{F}_{q} \setminus \{0,1\}$ and $n, m > 0$ are integers. 
Then $f(X)$ is a permutation polynomial over $\mathbb{F}_{q^{2n+1}}$.
\end{lem}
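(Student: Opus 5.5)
The plan is to prove injectivity directly. The key observation is that $q=2^m$ is even, so the exponent factors as $\frac{q^2+q}{2}=\frac{q}{2}(q+1)$. Hence the polynomial $g(X)=X^{\frac{q^2+q}{2}}$ satisfies $g(x)^2=x^{q+1}$ for every $x\in\F_{q^{2n+1}}$. Since squaring is an automorphism, $g(x)$ is the unique square root of $x^{q+1}$. Write $T=\Tr_{q}^{q^{2n+1}}$.

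First, suppose $f(x)=f(y)$ with $x,y\in\F_{q^{2n+1}}$. Then $\gamma(x+y)=T(g(x))+T(g(y))\in\F_q$. Since $\gamma\in\F_q^*$, this gives $y=x+\delta$ for some $\delta\in\F_q$. So it suffices to show that for $\delta\in\F_q$ the equation $f(x+\delta)=f(x)$ forces $\delta=0$. In characteristic $2$ that equation reads
\[
\gamma\delta=T\bigl(g(x+\delta)\bigr)+T\bigl(g(x)\bigr).
\]

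Second, compute $g(x+\delta)$ using $\delta^q=\delta$:
\[
g(x+\delta)^2=(x+\delta)^{q+1}=x^{q+1}+\delta x^q+\delta x+\delta^2 .
\]
Taking square roots, which is additive in characteristic $2$, gives
\[
g(x+\delta)=g(x)+\delta^{1/2}\bigl(x^{q/2}+x^{1/2}\bigr)+\delta .
\]
Now take $T$. Since $\delta^{1/2}\in\F_q$, it factors out of the trace. We have $T(x^{q/2})=T\bigl((x^{1/2})^q\bigr)=T(x^{1/2})$, so the middle term contributes $\delta^{1/2}\bigl(T(x^{1/2})+T(x^{1/2})\bigr)=0$. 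The last term gives $T(\delta)=(2n+1)\delta=\delta$, because the extension degree $2n+1$ is odd.

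Hence $T(g(x+\delta))+T(g(x))=\delta$ for all $x$. The injectivity equation then becomes $\gamma\delta=\delta$, that is, $(\gamma+1)\delta=0$. Since $\gamma\neq 1$, this forces $\delta=0$, so $f$ is injective and therefore a permutation of the finite field $\F_{q^{2n+1}}$.

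The only step needing care is the square-root manipulation, namely checking that $X^{\frac{q^2+q}{2}}$ really is the square root of $X^{q+1}$ and that the cross terms $x^{q/2}$ and $x^{1/2}$ have equal traces. Both follow from $q$ being a power of $2$ and Frobenius invariance of $T$. The hypothesis that the degree is odd is used exactly once, to get $T(\delta)=\delta$ rather than $0$.
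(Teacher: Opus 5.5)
Your strategy is sound. In substance it is the computation the paper carries out for the sufficiency part of Theorem~\ref{T34}; the paper itself only cites this lemma. There one substitutes $X=a+\gamma u$ with $u\in\F_q$ and uses that, for $\gamma\in\F_q$, the cross terms vanish under the trace. Your injectivity formulation packages the same fact as $f(x+\delta)=f(x)+(\gamma+1)\delta$ for $\delta\in\F_q$.

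However, the step you yourself flag as the crux is false as written. For $g(X)=X^{\frac{q^2+q}{2}}$ one has
\[
g(x)^2=x^{q^2+q}=\bigl(x^{q+1}\bigr)^q ,
\]
not $x^{q+1}$. You have silently replaced the exponent $\frac q2$ in $g(x)=(x^{q+1})^{q/2}$ by $\frac12$, which is legitimate only on $\F_q$. Indeed $(x^{q+1})^q=x^{q+1}$ if and only if $x^{q+1}\in\F_q$, i.e. if and only if $x\in\F_{q^2}\cap\F_{q^{2n+1}}=\F_q$. Consequently your pointwise expansion of $g(x+\delta)$ is also wrong. Expanding $\bigl((x+\delta)^{q+1}\bigr)^{q/2}$ and using $\delta^{q/2}=\delta^{1/2}$ and $\delta^q=\delta$ gives instead
\[
g(x+\delta)=g(x)+\delta^{1/2}\bigl(x^{q^2/2}+x^{q/2}\bigr)+\delta .
\]

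The repair is one line, because only $T(g(\cdot))$ enters your argument. Since $T(x^{q^2/2})=T\bigl((x^{q/2})^q\bigr)=T(x^{q/2})$, the middle term still has trace $0$. Equivalently, the true square root of $x^{q+1}$ is $h(x)=g(x)^{q^{2n}}$, and $T(g(x))=T(h(x))$ by Frobenius invariance of $T$. So your computation is correct verbatim for $h$ in place of $g$. Either way you get $T(g(x+\delta))=T(g(x))+\delta$, with the oddness of $2n+1$ entering exactly as you say. The rest of your proof, $(\gamma+1)\delta=0\Rightarrow\delta=0$, then goes through unchanged.
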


\begin{lem}\label{L25}\cite[Corollary 1]{ZHZ}
    Let $n,m$ be positive integers and $q=2^m$ and $\gamma\in\mathbb{F}_q^*$. The polynomial
\(
f(X)
=
X+\gamma\,\operatorname{Tr}_{q}^{q^n}\!\left(X^3-X^{q+2}\right)
\)
permutes $\mathbb{F}_{q^n}$.

\end{lem}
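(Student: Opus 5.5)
The plan is to use the standard criterion for maps of the form $X+\gamma\Tr_{q}^{q^n}(h(X))$ with $\gamma\in\F_q^*$. Set $T=\Tr_{q}^{q^n}$ and $f(X)=X+\gamma T(X^3-X^{q+2})$. Since $\gamma T(\cdot)\in\F_q$, the shifted value $f(x)-x$ lies in $\F_q$. So $f$ preserves each coset $x+\F_q$ of the additive subgroup $\F_q\subseteq\F_{q^n}$. It therefore suffices to show that $f$ is injective on each such coset. Concretely, I will fix $x\in\F_{q^n}$ and $a\in\F_q$ and compute $g(x+a)-g(x)$, where $g(X)=T(X^3-X^{q+2})$.

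First I expand $(x+a)^3$ and $(x+a)^{q+2}=(x^q+a)(x+a)^2$, using $a^q=a$; characteristic $2$ kills the middle binomial coefficients. This gives
\[
(x+a)^3=x^3+ax^2+a^2x+a^3,\qquad (x+a)^{q+2}=x^{q+2}+ax^2+x^q a^2+a x^{q+1}\cdot 0+\dots
\]
More carefully, $(x^q+a)(x^2+a^2)=x^{q+2}+a^2x^q+ax^2+a^3$. The difference is then
\[
(x+a)^3-(x+a)^{q+2}=x^3-x^{q+2}+a^2(x-x^q).
\]
Applying $T$ and using $T(x^q)=T(x)$, the term $a^2T(x-x^q)$ vanishes. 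Hence $g(x+a)=g(x)$ for all $a\in\F_q$; in other words, every element of $\F_q$ is a $0$-linear structure of $g$.

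Consequently $f(x+a)=x+a+\gamma g(x)=f(x)+a$. Suppose $f(x)=f(y)$. Then $y-x=\gamma(g(x)-g(y))\in\F_q$, so write $y=x+a$ with $a\in\F_q$. This gives $f(y)=f(x)+a$, which forces $a=0$. Thus $f$ is injective, and hence a permutation of the finite set $\F_{q^n}$.

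The main step is the binomial expansion: checking that the cross terms involving $x^{q+1}$ do not appear. They do not, because the $a$-dependence enters only through $x^q+a$ and $(x+a)^2=x^2+a^2$. Beyond that, everything reduces to trace invariance under Frobenius. If $\gamma$ were allowed outside $\F_q$ this argument would fail, but the hypothesis $\gamma\in\F_q^*$ is exactly what is needed.
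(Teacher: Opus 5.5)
Your proof is correct. The paper only cites this lemma from \cite{ZHZ}. Its own argument for the statement is Case 1 of Theorem \ref{T35}, since $X^3-X^{q+2}=X^3+X^{q+2}$ in characteristic $2$.

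There the authors fix a target $a\in\F_{q^n}$ and write a preimage as $X=a+\gamma u$ with $u\in\F_q$. They expand $u=\Tr_{q}^{q^n}\bigl((a+\gamma u)^3+(a+\gamma u)^{q+2}\bigr)$ into the cubic \eqref{e34}, whose coefficients $\Tr_{q}^{q^n}(\gamma^3+\gamma^{q+2})$, $\Tr_{q}^{q^n}(\gamma^2a+\gamma^2a^q)$ and $\Tr_{q}^{q^n}(\gamma a^2+\gamma^q a^2)$ all vanish for $\gamma\in\F_q$. This leaves the unique solution $u=\Tr_{q}^{q^n}(a^3+a^{q+2})$, i.e.\ an explicit preimage.

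You instead prove the underlying identity once, as $g(x+c)=g(x)$ for all $c\in\F_q$ (every element of $\F_q$ is a $0$-linear structure of $g$). You then finish with the translation rule $f(x+c)=f(x)+c$. The computation is the same: the vanishing of those three coefficients is exactly the cancellation you find in $(x+c)^3-(x+c)^{q+2}$, with base point $a$ and shift $c=\gamma u$.

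What each route buys:
\begin{itemize}
\item Your packaging is shorter and more conceptual, and it gives the involution claim of Theorem \ref{T35} at once: taking $c=\gamma g(x)\in\F_q$ yields $f(f(x))=f(x)+c=x$.
\item The paper keeps $\gamma$ general in \eqref{e34}, which is what also gives the converse (non-permutation for $\gamma\notin\F_q$). Your argument does not address that direction, but the lemma does not ask for it.
\end{itemize}

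Two cosmetic fixes. Delete the garbled first expansion of $(x+a)^{q+2}$ ending in ``$\cdot 0+\dots$''. Also, characteristic $2$ kills the middle binomial coefficients only in the square; in the cube they become $1$, as your (correct) displayed formula for $(x+a)^3$ already reflects.
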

\begin{lem}\label{L26}\cite[Theorem 3.1]{JYLQ}
Let $q = 2^m$ and $f(X) = X + \gamma \operatorname{Tr}_{q}^{q^2}(X^3 + X^{q+2})$.
Then $f(X)$ is a permutation polynomial of $\mathbb{F}_{q^2}$ if and only if $\gamma \in \mathbb{F}_q$.
\end{lem}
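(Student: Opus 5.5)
The plan rests on one algebraic identity. It collapses the trace term to a cube of the trace, and then both directions follow from a reduction to a univariate map on $\F_q$.

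\emph{Step 1 (the identity).} Write $t=\Tr_{q}^{q^2}(X)=X+X^q$. In characteristic $2$,
\[
(X+X^q)^3=X^3+X^{q+2}+X^{2q+1}+X^{3q},
\]
and this is exactly $\Tr_{q}^{q^2}(X^3+X^{q+2})$ computed on $\F_{q^2}$, where $X^{q^2}=X$. Hence
\[
f(x)=x+\gamma\,\Tr_{q}^{q^2}(x)^3 \quad\text{for all } x\in\F_{q^2}.
\]

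\emph{Step 2 (sufficiency).} Suppose $\gamma\in\F_q$. If $\gamma=0$, then $f$ is the identity. If $\gamma\neq 0$, the claim is Lemma~\ref{L25} with $n=2$, since $-X^{q+2}=X^{q+2}$ in characteristic $2$. A direct argument also works. Since $\gamma t^3\in\F_q$, we have $\Tr_{q}^{q^2}(\gamma t^3)=2\gamma t^3=0$, so $\Tr_{q}^{q^2}(f(x))=\Tr_{q}^{q^2}(x)$. Therefore $f(x)=y$ forces $t=\Tr_{q}^{q^2}(y)$ and then $x=y+\gamma\Tr_{q}^{q^2}(y)^3$. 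This shows $f$ is injective and also gives its inverse.

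\emph{Step 3 (necessity).} Suppose $\gamma\notin\F_q$, and put $c=\Tr_{q}^{q^2}(\gamma)$, which lies in $\F_q^*$. Taking traces gives
\[
\Tr_{q}^{q^2}(f(x))=g(t), \qquad g(t)=t+ct^3,
\]
where $g$ maps $\F_q$ to $\F_q$. It suffices to find $x\neq x'$ with $f(x)=f(x')$. Since squaring is a bijection of $\F_q$ in characteristic $2$, choose $s\in\F_q^*$ with $s^2=c^{-1}$; then $g(s)=s+s=0=g(0)$. Now pick $x'$ with $\Tr_{q}^{q^2}(x')=0$, for instance $x'=0$, and set $x=\gamma s^3+\alpha$, where $\alpha$ is chosen so that $\Tr_{q}^{q^2}(x)=s$.

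Concretely, take $x'=0$, so $f(0)=0$, and look for $x$ with $x=\gamma s^3$ and $\Tr_{q}^{q^2}(x)=s$. Then $f(x)=x+\gamma s^3=0=f(0)$, so the two conditions are consistent only if $\Tr_{q}^{q^2}(\gamma s^3)=cs^3=s$. That holds exactly when $cs^2=1$, which is our choice of $s$. Thus $x=\gamma s^3\neq 0$ is a nonzero element with $f(x)=f(0)$, and $f$ is not a permutation.

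The only real obstacle is spotting the cube identity in Step 1. Once it is in hand, the rest is bookkeeping, and the key choice is $s=c^{-1/2}$, which exists because every element of $\F_q$ is a square in characteristic $2$.
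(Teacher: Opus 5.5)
Your proof is correct, and it is more direct than what the paper does. The paper only quotes this lemma from \cite{JYLQ}. Its own argument covering it is Theorem~\ref{T35} with $n=2$, or equivalently Theorem~\ref{T36} with $n=2$, $c_1=c_2=0$, as the Remark after that theorem notes.

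In those proofs one writes a solution of $f(X)=a$ as $X=a+\gamma u$ with $u\in\F_q$, expands to a cubic in $u$ (equation \eqref{e34}, resp.\ \eqref{e313}), and counts its roots in $\F_q$. For $\gamma\in\F_q$ the cubic collapses to a linear equation. For $\gamma\notin\F_q$, Theorem~\ref{T35} splits according to whether $\Tr(\gamma^3)=\Tr(\gamma^{q+2})$, and uses a counting argument over $a$ in the degenerate subcase.

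Your identity $\Tr_q^{q^2}(X^3+X^{q+2})=\Tr_q^{q^2}(X)^3$ bypasses all of this. Sufficiency becomes $\Tr\circ f=\Tr$, which also shows that $f$ is an involution. The degenerate subcase is empty when $n=2$, because $\Tr_q^{q^2}(\gamma^3+\gamma^{q+2})=c^3\neq0$.

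Your witness is the same one the paper's method yields at $a=0$. There the roots of \eqref{e34} are $u=0$ and $u=\Tr(\gamma^3+\gamma^{q+2})^{-1/2}=c^{-3/2}$, giving the preimages $0$ and $\gamma c^{-3/2}$. Case~1 of Theorem~\ref{T36} with $a=0$ produces the same pair via $y\in\{0,c^{-1/2}\}$.

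What your shortcut gives up is the extension to $n\ge3$. There $\Tr_q^{q^n}(X^3+X^{q+2})$ is no longer a function of $\Tr_q^{q^n}(X)$, so the $u$-parametrization is genuinely needed. Theorem~\ref{T36}, by contrast, rests on exactly the structure you used, since $\Tr(X^2\Tr(X))=\Tr(X)^3$ for every $n$.

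One small cleanup: the sentence in Step 3 about choosing $\alpha$ so that $\Tr(x)=s$ is muddled. The right general choice is $\alpha=x'$ for any $x'$ of trace zero, and the concrete version $x'=0$, $x=\gamma s^3$ is all you need.
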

\begin{lem}\label{L27}\cite[Theorem 3.3]{JYLQ}
Let $q = 2^m$ and $f(X) = X + \gamma \operatorname{Tr}_{q}^{q^2}(X+X^2+X^3 + X^{q+2})$.
Then $f(X)$ is a permutation polynomial of $\mathbb{F}_{q^2}$ if and only if $\gamma \in \mathbb{F}_q$.
\end{lem}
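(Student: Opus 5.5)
The plan is to reduce everything to a one-variable question over $\F_q$ using the structure of the trace in characteristic $2$. For $x\in\F_{q^2}$ write $T=\Tr_q^{q^2}(x)=x+x^q\in\F_q$. Expanding in characteristic $2$ gives $(x+x^q)^3=x^3+x^{q+2}+x^{2q+1}+x^{3q}$, which is exactly $\Tr_q^{q^2}(X^3+X^{q+2})$ evaluated at $x$. Also $\Tr_q^{q^2}(x^2)=T^2$. Hence
\[
f(x)=x+\gamma\, g(T),\qquad g(t)=t+t^2+t^3\in\F_q[t].
\]

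\emph{Sufficiency.} Assume $\gamma\in\F_q$. Since $g(T)\in\F_q$, the element $\gamma g(T)$ lies in $\F_q$, and the trace of any element of $\F_q$ is $2\gamma g(T)=0$. Therefore $\Tr_q^{q^2}(f(x))=T$. If $f(x)=f(y)$, then $x$ and $y$ have the same trace $T$, so $g(T(x))=g(T(y))$. Consequently $x=f(x)-\gamma g(T)=y$, and $f$ is injective, hence a permutation.

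\emph{Necessity.} Count preimages for general $\gamma$. Fix $c\in\F_{q^2}$ and put $\tau=\Tr_q^{q^2}(\gamma)$. If $f(x)=c$ with $t=T(x)$, then $x=c+\gamma g(t)$. Taking traces, and using $g(t)\in\F_q$, gives
\[
t=\Tr_q^{q^2}(c)+\tau g(t).
\]
Conversely, each $t\in\F_q$ solving
\[
\varphi(t):=t+\tau g(t)=\Tr_q^{q^2}(c)
\]
yields the preimage $x=c+\gamma g(t)$. This $x$ has trace $t$, so distinct such $t$ give distinct preimages. Thus the number of preimages of $c$ equals $\#\{t\in\F_q:\varphi(t)=\Tr_q^{q^2}(c)\}$. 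So $f$ is a permutation if and only if $\varphi$ permutes $\F_q$.

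Now suppose $\gamma\notin\F_q$. Then $\tau=\gamma+\gamma^q\neq0$, and it suffices to show that $\varphi$ is not injective on $\F_q$. Substitute $t=s+1$. In characteristic $2$,
\[
t^3+t^2=s^3+s,\qquad\text{so}\qquad g(t)=s^3+s+s+1=s^3+1 .
\]
Hence
\[
\varphi(s+1)=(s+1)+\tau(s^3+1)=\tau s^3+s+(1+\tau),
\]
and up to an additive constant $\varphi$ becomes $s\mapsto s(\tau s^2+1)$. Squaring is bijective on $\F_q$, so $s_0=\tau^{-1/2}\in\F_q^*$ exists. Both $s=0$ and $s=s_0$ are zeros of $s(\tau s^2+1)$, so $\varphi$ takes the same value at $t=1$ and $t=1+s_0$. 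This collision is valid for every $m\ge1$, including $q=2$.

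The only step needing care is this necessity direction. One must check that the preimage count exactly matches the number of solutions of $\varphi(t)=\Tr_q^{q^2}(c)$, so that a collision of $\varphi$ produces two distinct preimages of some $c$. Then one must find a collision of the cubic $\varphi$ that exists for every $\tau\in\F_q^*$; the shift $t=s+1$ makes that collision explicit.
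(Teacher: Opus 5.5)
Your proof is correct. It is essentially the paper's argument. The paper only cites this lemma, but it recovers it as the case $n=2$, $c_1=c_2=1$ of Theorem~\ref{T36}. That proof reduces $f(X)=a$ to the same cubic over $\F_q$: your $t=\Tr_q^{q^2}(x)$ is the affine reparametrization $t=\Tr_q^{q^2}(a)+\Tr_q^{q^2}(\gamma)u$ of its variable $u$. It then depresses the cubic by the same shift ($y=t+c_2=t+1$, which is your $s$), and obtains the same collision by choosing $\Tr_q^{q^2}(a)=c_2\bigl(c_1\Tr_q^{q^2}(\gamma)+1\bigr)=1+\tau$, which leaves $\tau y^3+y=0$ with the two roots $0$ and $\tau^{-1/2}$.
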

\begin{lem}\label{L28}\cite[Theorem 3.5]{JYLQ}
Let $q = 2^m$ and $f(X) = X + \gamma \operatorname{Tr}_{q}^{q^2}(X+X^3 + X^{q+2})$.
Then $f(X)$ is a permutation polynomial of $\mathbb{F}_{q^2}$ if and only if 
\begin{enumerate}[(a)]
\item $\gamma \in \mathbb{F}_q$ if $m$ is even,
\item $\Tr_{q}^{q^2}(\gamma+\gamma^2)=0$ if $m$ is odd.
\end{enumerate}
\end{lem}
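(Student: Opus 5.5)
The statement is Lemma~\ref{L28}. We work in characteristic $2$ with $q=2^m$, and we write $T=\Tr_{q}^{q^2}$, $x\in\F_{q^2}$ and $\bar x=x^q$. The plan is to follow the AGW-type reduction that is standard for $X+\gamma\Tr(h(X))$: compute $T(h(x))$ explicitly, reduce injectivity of $f$ to a statement about $\F_q$, and then read off the condition on $\gamma$.

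\textbf{Step 1: compute $T(h(x))$.} Set $h(X)=X+X^3+X^{q+2}$. Then
\[
T(h(x)) = t + x^3+\bar x^3 + x^2\bar x + x\bar x^2 ,
\]
where $t=x+\bar x$ and $s=x\bar x$. Since $x^3+\bar x^3+x^2\bar x+x\bar x^2 = t^3 + s t + s t = t^3$ in characteristic $2$, this gives
\[
T(h(x)) = t + t^3 = T(x)+T(x)^3 .
\]
So $f(x)=x+\gamma\, g(T(x))$ with $g(u)=u+u^3$, and $g(u)\in\F_q$ whenever $u\in\F_q$.

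\textbf{Step 2: reduce to a map on $\F_q$.} If $f(x)=f(y)$, then $x-y=\gamma\,(g(T(y))-g(T(x)))\in\gamma\F_q$. Taking traces, $f$ induces the map
\[
\phi(u)=u+T(\gamma)\,g(u)=u+T(\gamma)(u+u^3)
\]
on $T(x)=u\in\F_q$. Indeed $T(f(x))=\phi(T(x))$. Using the standard criterion, I will show that $f$ is a PP if and only if $\phi$ permutes $\F_q$. For sufficiency: if $f(x)=f(y)$, then $\phi(T(x))=\phi(T(y))$, hence $T(x)=T(y)$, hence $x=y$. For necessity: if $\phi(u_1)=\phi(u_2)$ with $u_1\neq u_2$, choose $x$ with $T(x)=u_1$ and set $y=x+\gamma(g(u_1)-g(u_2))$. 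Then $T(y)=u_1+T(\gamma)(g(u_1)-g(u_2))=\phi(u_1)-T(\gamma)g(u_2)=u_2$, so $f(y)=f(x)$ with $y\neq x$.

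\textbf{Step 3: when $\phi$ permutes $\F_q$.} Let $a=T(\gamma)\in\F_q$. Then $\phi(u)=(1+a)u+a u^3$.
\begin{enumerate}[(i)]
\item If $a=0$, then $\phi(u)=u$, which is a PP. This covers $\gamma\in\F_q$, since then $T(\gamma)=2\gamma=0$.
\item If $a=1$, then $\phi(u)=u^3$. This is a PP if and only if $\gcd(3,2^m-1)=1$, that is, if and only if $m$ is odd.
\item If $a\notin\{0,1\}$, write $\phi(u)=a\,(u^3+bu)$ with $b=(1+a)/a\neq0$. I claim $u^3+bu$ is never a PP of $\F_q$ for $b\neq 0$ (for $q>2$). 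In characteristic $2$ this needs an argument: $u^3+bu$ is a PP exactly when $u^2+uv+v^2=b$ has no solution with $v\neq u$, and counting solutions of that quadratic form shows a collision always exists. Alternatively, I can invoke Dickson's classification of normalized PPs of degree $3$, which says $X^3+bX$ with $b\neq0$ is not a PP over $\F_{2^m}$. The main obstacle is making this step rigorous, including the small case $q=2$, where $a\notin\{0,1\}$ cannot occur anyway.
\end{enumerate}

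\textbf{Step 4: translate into conditions on $\gamma$.} By Step 3, $f$ is a PP if and only if $T(\gamma)=0$, or $m$ is odd and $T(\gamma)=1$. Now $T(\gamma)=0$ means $\gamma\in\F_q$. For $m$ odd, the condition $T(\gamma)\in\{0,1\}$ is equivalent to $T(\gamma)^2+T(\gamma)=0$, that is, $T(\gamma^2+\gamma)=0$, because $T$ is additive and commutes with squaring. For $m$ even only $\gamma\in\F_q$ survives. This is exactly the statement of Lemma~\ref{L28}.
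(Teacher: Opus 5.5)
Your proof is correct and follows essentially the paper's route. The paper obtains this lemma as the case $n=2$, $c_1=1$, $c_2=0$ of Theorem~\ref{T36}; there the depressed cubic \eqref{e314} is exactly your equation $\phi(y)=\Tr_{q}^{q^2}(a)$ with $y=\Tr_{q}^{q^2}(X)$, divided by $\Tr_{q}^{q^2}(\gamma)$, and the three cases $\Tr_{q}^{q^2}(\gamma)=0$, $\Tr_{q}^{q^2}(\gamma)=1$ and $\Tr_{q}^{q^2}(\gamma)\notin\{0,1\}$ are split just as you split them.

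The step you flag as the main obstacle is a one-liner, as in the paper's Case~1. Since $u^3+bu=u(u+\sqrt{b})^2$, it vanishes at both $0$ and $\sqrt{b}\neq 0$, so you need neither Dickson's classification nor a quadratic-form count.
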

\begin{lem}\label{L29}\cite[Theorem 3.6]{JYLQ}
Let $q = 2^m$ and $f(X) = X + \gamma \operatorname{Tr}_{q}^{q^2}(X^2+X^3 + X^{q+2})$.
Then $f(X)$ is a permutation polynomial of $\mathbb{F}_{q^2}$ if and only if 
\begin{enumerate}[(a)]
\item $\gamma \in \mathbb{F}_q$ if $m$ is even,
\item $\Tr_{q}^{q^2}(\gamma+\gamma^2)=0$ if $m$ is odd.
\end{enumerate}
\end{lem}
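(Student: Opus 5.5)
The first step is to collapse the trace term to a polynomial in $t=\Tr_{q}^{q^2}(x)$, so that the question becomes whether an explicit cubic permutes $\F_q$. Write $h(X)=X^2+X^3+X^{q+2}$ and, for $x\in\F_{q^2}$, put $t=\Tr_{q}^{q^2}(x)=x+x^q\in\F_q$. In characteristic $2$ we have
\[
t^3=x^3+x^{q+2}+x^{2q+1}+x^{3q},
\]
since the binomial coefficients $3$ reduce to $1$. Hence
\[
\Tr_{q}^{q^2}(h(x))=x^2+x^{2q}+x^3+x^{3q}+x^{q+2}+x^{2q+1}=t^2+t^3 .
\]
Therefore $f(x)=x+\gamma\,\varphi(t)$ with $\varphi(t)=t^2+t^3\in\F_q$.

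The second step is the standard reduction for polynomials of the form $x+\gamma\varphi(\Tr(x))$. Set $c=\Tr_{q}^{q^2}(\gamma)=\gamma+\gamma^q\in\F_q$ and consider $f(x)=a$. Applying the trace gives $\Tr_{q}^{q^2}(a)=g(t)$, where $g(t)=t+c(t^2+t^3)$. Conversely, once $t$ is known, $x=a-\gamma\varphi(t)$ is determined uniquely. So $f$ permutes $\F_{q^2}$ if and only if $g$ permutes $\F_q$; this is the AGW-type commutative diagram argument, which I would write out directly.

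Next I analyse $g$. If $c=0$, then $g$ is the identity and $f$ is a PP. If $c\neq0$, substitute $t=s+1$. Using $c(s+1)^3+c(s+1)^2=cs^3+cs$, we get
\[
g(s+1)=cs^3+(c+1)s+1 .
\]
So $g$ permutes $\F_q$ if and only if $s^3+bs$ does, where $b=(c+1)/c$.

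The main step is to decide when $X^3+bX$ permutes $\F_{2^m}$. For $u\neq0$ and $y=x+u$ we have $x^3+y^3=u(x^2+xu+u^2)$. Injectivity therefore fails if and only if $x^2+xu+u^2=b$ has a solution with $u\ne0$. Putting $z=x/u$ and $w=1/u^2$, this becomes $z^2+z=1+bw$, which is solvable if and only if the absolute trace $\Tr_{2}^{2^m}(1+bw)=0$. As $u$ runs over $\F_q^*$, $w$ runs over all of $\F_q^*$, because squaring is bijective. Thus $X^3+bX$ is a PP if and only if $\Tr_{2}^{2^m}(1)+\Tr_{2}^{2^m}(bw)=1$ for every $w\in\F_q^*$.

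If $b\neq0$, then $bw$ runs over all of $\F_q^*$. For $m\ge2$ the absolute trace takes both values on $\F_q^*$, so the condition fails. If $b=0$, the condition reads $\Tr_{2}^{2^m}(1)=1$, which holds exactly when $m$ is odd. The case $m=1$ is trivial, since then $t^2=t^3=t$ on $\F_2$, so $g$ is the identity for every $c$.

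Hence $f$ is a PP if and only if $c=0$, or $m$ is odd and $c=1$ (note that $b=0$ exactly when $c=1$). It remains to translate this into the stated conditions:
\begin{enumerate}[(a)]
\item for $m$ even, $c=0$ is equivalent to $\gamma\in\F_q$;
\item for $m$ odd, $c\in\{0,1\}$ is equivalent to $c+c^2=0$, that is, $\Tr_{q}^{q^2}(\gamma+\gamma^2)=c+c^2=0$.
\end{enumerate}
The only delicate point is the characterisation of the cubic $X^3+bX$, in particular making sure that $w$ sweeps out all of $\F_q^*$ and handling the degenerate case $m=1$ separately.
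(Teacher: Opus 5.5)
Your proof is correct and follows essentially the paper's route. The paper recovers this lemma as the case $n=2$, $c_1=0$, $c_2=1$ of Theorem~\ref{T36}, whose substitution $u=(y+1+\Tr_q^{q^2}(a))/\Tr_q^{q^2}(\gamma)$ gives exactly your depressed cubic (indeed $y=s$), with the same case split on whether $1+1/\Tr_q^{q^2}(\gamma)$ vanishes.

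The one local difference is in the case $b\neq 0$. There the paper skips your absolute-trace count: it takes $\Tr_q^{q^2}(a)=1$, so the cubic becomes $y(y^2+b)$ with the two distinct roots $0$ and $\sqrt{b}$ (equivalently, $s^3+bs$ takes the same value at $0$ and $\sqrt{b}$). For $b=0$ it uses that $\gcd(3,q-1)=1$ exactly when $m$ is odd.
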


\section{Permutation polynomials over $\F_{q^2}$ and $\F_{q^3}$}\label{S3}
In the following theorem, we provide the necessary and sufficient condition on $\gamma$ for Lemma \ref{L21} in the case where $q=2^m$ and $n=3.$ 

   \begin{thm}\label{T31}
    Let $q=2^m$, where $m$ is a positive integer. Let $f(X)=X+\gamma \Tr_{q}^{q^3}(X^2+X^{q+1}) \in \F_{q^3}[X].$ Then $f(X)$ is a polynomial over $\F_{q^3}$ if and only if $\gamma \in \F_q.$ 
\end{thm}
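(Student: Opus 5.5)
Sufficiency is already covered by Lemma \ref{L21}: in characteristic $2$ we have $X^2+X^{q+1}=X^2-X^{q+1}$, so for $\gamma\in\F_q^*$ the map is a permutation, and $\gamma=0$ gives the identity. The real content is necessity. (We read ``is a polynomial'' in the statement as ``is a permutation polynomial''.)

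For necessity we show that if $\gamma\notin\F_q$ then $f$ has a collision. Write $T(x)=\Tr_q^{q^3}(x^2+x^{q+1})\in\F_q$. Then $f(x)=x+\gamma T(x)$, so $f(x)=f(y)$ with $x\neq y$ forces $x-y=\gamma\,(T(y)-T(x))=\gamma c$ for some $c\in\F_q^*$. Hence $f$ fails to be injective exactly when there exist $x\in\F_{q^3}$ and $c\in\F_q^*$ with
\[
T(x+\gamma c)-T(x)=c .
\]
By homogeneity we may take $c=1$ after replacing $x$ by $cx$: since $T(cx)=c^2T(x)$, the condition for general $c$ becomes $T(x+\gamma)+T(x)=c^{-1}$, and any nonzero value in $\F_q$ is acceptable.

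Next we expand this difference. Since $Q(x)=x^2+x^{q+1}$ is a quadratic form, we get
\[
T(x+\gamma)+T(x)=T(\gamma)+\Tr_q^{q^3}\bigl(x^q\gamma+x\gamma^q\bigr)
=T(\gamma)+\Tr_q^{q^3}\bigl(x(\gamma+\gamma^{q^2})\bigr),
\]
using that the cross term of $x^2$ vanishes in characteristic $2$. Thus $D(x):=T(x+\gamma)+T(x)$ is affine in $x$, with linear part $x\mapsto \Tr_q^{q^3}(x(\gamma+\gamma^{q^2}))$. If $\gamma\notin\F_q$, then $\gamma^{q^2}\neq\gamma$, because $\gamma^{q^2}=\gamma$ would put $\gamma$ in $\F_{q^2}\cap\F_{q^3}=\F_q$. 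So the linear part is a nonzero $\F_q$-linear functional, which is surjective onto $\F_q$. We can therefore choose $x$ with $D(x)=1$, giving $x\neq x+\gamma$ and
\[
f(x+\gamma)+f(x)=\gamma+\gamma(T(x+\gamma)+T(x))=\gamma+\gamma=0 .
\]
This collision shows $f$ is not a permutation.

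The main obstacle is only bookkeeping: getting the bilinear expansion of $\Tr(Q)$ right, and checking that $\gamma+\gamma^{q^2}\neq 0$ via the intersection $\F_{q^2}\cap\F_{q^3}=\F_q$. The scaling reduction by $c$ is not actually needed, since we directly exhibit the collision pair $(x,x+\gamma)$.
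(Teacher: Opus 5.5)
Your proof is correct, but it is a genuinely different argument from the paper's. There is one slip to fix. Since $\Tr_q^{q^3}(x^q\gamma)=\Tr_q^{q^3}\bigl((x^q\gamma)^{q^2}\bigr)=\Tr_q^{q^3}(x\gamma^{q^2})$, the linear part of $D$ is $x\mapsto\Tr_q^{q^3}\bigl(x(\gamma^q+\gamma^{q^2})\bigr)$, not $\Tr_q^{q^3}\bigl(x(\gamma+\gamma^{q^2})\bigr)$. This does not affect your conclusion. In fact $\gamma^q+\gamma^{q^2}=(\gamma+\gamma^q)^q$ is nonzero precisely when $\gamma\notin\F_q$, so the appeal to $\F_{q^2}\cap\F_{q^3}=\F_q$ becomes unnecessary. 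The collision can also be made explicit: $x=(1+T(\gamma))/(\gamma^q+\gamma^{q^2})$ gives $D(x)=T(\gamma)+\Tr_q^{q^3}(1+T(\gamma))=1$, so $f(x)=f(x+\gamma)$.

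The paper instead fixes a target $a$, writes $X=a+\gamma u$ with $u\in\F_q$, and counts the roots in $\F_q$ of $u^2\Tr_q^{q^3}(\gamma^2+\gamma^{q+1})+u\bigl(\Tr_q^{q^3}(\gamma^qa+\gamma a^q)+1\bigr)+\Tr_q^{q^3}(a^2+a^{q+1})=0$. Sufficiency falls out directly, since for $\gamma\in\F_q$ the $u^2$-coefficient is $0$ and the $u$-coefficient is $1$. Necessity, however, requires a case split. If $\Tr_q^{q^3}(\gamma^2)\neq\Tr_q^{q^3}(\gamma^{q+1})$, any $a\in\F_q$ has two preimages. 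If they are equal, the paper first argues this cannot happen for odd $m$, and then, for even $m$, verifies by direct computation that $a=1/(\gamma^q+\gamma^{q^2})$ makes the quadratic vanish identically, giving $q$ preimages.

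Your observation that $T(x+\gamma)+T(x)$ is affine in $x$ with nonzero linear part handles both cases at once, and never needs to look at $T(\gamma)$ or the parity of $m$. The paper's route gives finer information, namely explicit fibres with $2$ or $q$ points. Yours is shorter and more general: over $\F_{q^n}$ the same computation gives linear part $\Tr_q^{q^n}\bigl(x(\gamma^q+\gamma^{q^{n-1}})\bigr)$, which is nonzero whenever $\gamma\notin\F_{q^{\gcd(2,n)}}$. Together with Lemma~\ref{L21}, this settles the characterization for every odd $n$. Your criterion even yields sufficiency without the lemma, since for $\gamma\in\F_q$ and $c\in\F_q^*$ one gets $T(x+\gamma c)+T(x)=T(\gamma c)=0\neq c$.
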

\begin{proof}
    It is immediate that $f(X)$ is a permutation polynomial over $\F_{q^3}$ for $\gamma=0.$ We restrict to the case $\gamma \neq 0$ and aim to prove that $f(X)$ is a permutation polynomial if and only if $\gamma \in \F_q.$ For this, we will show that  for any $a \in \F_{q^3}$, the equation $$f(X)=a$$  has a unique solution in $\F_{q^3}$ if and only if $\gamma \in \F_q.$ Now, the equation $f(X)=a$ implies that
    \begin{equation}\label{e36}
    X+\gamma\Tr_{q}^{q^3}(X^2+X^{q+1})=a.    
    \end{equation}
    Let $\frac{X+a}{\gamma}=u:=\Tr_{q}^{q^3}(X^2+X^{q+1}) \in \F_q$. We substitute $X=u\gamma+a$ into Equation \eqref{e36}. This yields 
    \begin{equation*}
        u+\Tr_{q}^{q^3}(u^2\gamma^2+a^2+u^2\gamma^{q+1}+u\gamma^qa+u\gamma a^q+a^{q+1})=0,
    \end{equation*}
    which is equivalent to 
    \begin{equation}\label{e37}
        u^2\Tr_{q}^{q^3}(\gamma^2+\gamma^{q+1})+u(\Tr_{q}^{q^3}(\gamma^qa+\gamma a^q)+1)+\Tr_{q}^{q^3}(a^2+a^{q+1})=0.
    \end{equation}
   Consequently, $f(X)$ permutes $\F_{q^3}$ if and only if Equation \eqref{e37} has a unique solution in $\F_q$ for any $a \in \F_{q^3}$. 

If $\gamma \in \F_q$, then $u=\Tr_{q}^{q^3}(a^2+a^{q+1})$ is a unique solution of Equation \eqref{e37}. Hence, $f(X)$ is a permutation polynomial for $\gamma \in \F_q.$ We now assume that $\gamma \in \F_{q^3} \setminus \F_q.$

\textbf{Case 1.} If $\Tr_{q}^{q^3}(\gamma^2) \neq \Tr_{q}^{q^3}(\gamma^{q+1})$. Then for $a \in \F_q$, Equation \eqref{e37} has two distinct solutions, namely, $$u=0 \text{ and }  u=\frac{1}{\Tr_{q}^{q^3}(\gamma^2+\gamma^{q+1})}.$$ Therefore, $f(X)$ is not a permutation polynomial over $\F_{q^3}$.

\textbf{Case 2.} Suppose that $\Tr_{q}^{q^3}(\gamma^2)=\Tr_{q}^{q^3}(\gamma^{q+1})$.  The equality  $$\Tr_{q}^{q^3}(\gamma^2)=\Tr_{q}^{q^3}(\gamma^{q+1})$$ implies that 
\begin{equation*}
    \gamma^2(1+\gamma^{2q-2}+\gamma^{2q^2-2}+\gamma^{q-1}+\gamma^{q^2+q-2}+\gamma^{q^2-1})=0.
\end{equation*}
Since $\gamma \neq 0$, this reduces to 
$$1+\gamma^{2q-2}+\gamma^{2q^2-2}+\gamma^{q-1}+\gamma^{q^2+q-2}+\gamma^{q^2-1}=0.$$
 Let $y=\gamma^{q-1}$, then above equation becomes 
 \begin{equation*}
     1+y^2+y^{2(q+1)}+y+y^{q+2}+y^{q+1}=0.
 \end{equation*}
 We now use $z=y^{q+1}$ in the above equation to obtain
 \begin{equation}\label{e38}
     z^2+(y+1)z+y^2+y+1=0.
 \end{equation}
Equation \eqref{e38} has a solution in $\F_{q^3}$ if and only if $\Tr_{2}^{2^{3m}}\left(\frac{y^2+y+1}{y^2+1}\right)=0.$ Observe that 
$$\Tr_{2}^{2^{3m}}\left(\frac{y^2+y+1}{y^2+1}\right)=\Tr_{2}^{2^{3m}}\left(\frac{y}{y^2+1}+1\right).$$
When $m$ is odd, we have
$$\Tr_{2}^{2^{3m}}\left(\frac{y}{y^2+1}+1\right)=\Tr_{2}^{2^{3m}}\left(\frac{y}{y^2+1}\right)+1.$$
Moreover, $$\Tr_{2}^{2^{3m}}\left(\frac{y}{y^2+1}\right)+1=\Tr_{2}^{2^{3m}}\left(\frac{1}{y+1}+\frac{1}{y^2+1}\right)+1=1.$$
Therefore, if $m$ is odd, Equation \eqref{e38} has no solution in $\F_q$. Thus, there does not exist any $\gamma \in \F_{q^3} \setminus \F_q$ such that $\Tr_{q}^{q^3}(\gamma^2)=\Tr_{q}^{q^3}(\gamma^{q+1}).$ Consequently, we assume that $m$ is an even positive integer. Since $\Tr_{q}^{q^3}(\gamma^2)=\Tr_{q}^{q^3}(\gamma^{q+1})$, therefore, Equation \eqref{e37} reduces to
\begin{equation}\label{e39}
    u(\Tr_{q}^{q^3}(\gamma^qa+\gamma a^q)+1)+\Tr_{q}^{q^3}(a^2+a^{q+1})=0.
\end{equation}
Let $a=\frac{1}{\gamma^q+\gamma^{q^2}}.$
Substituting this value into $\Tr_{q}^{q^3}(a^2+a^{q+1})$, we obtain 
\begin{equation*}
\begin{split}
\Tr_{q}^{q^3}\left(a^2+a^{q+1}\right)=&\frac{1}{\gamma^{2q}+\gamma^{2q^2}}+\frac{1}{\gamma^{2q^2}+\gamma^{2}}+\frac{1}{\gamma^{2}+\gamma^{2q}}+\frac{1}{(\gamma^{q}+\gamma^{q^2})(\gamma+\gamma^{q^2})}+\frac{1}{(\gamma+\gamma^{q})(\gamma+\gamma^{q^2})}+\\&\frac{1}{(\gamma^{q}+\gamma^{q^2})(\gamma+\gamma^{q})},
\end{split}
\end{equation*}
which is equivalent to
\begin{equation}\label{e310}
\begin{split}
\Tr_{q}^{q^3}\left(a^2+a^{q+1}\right)=&
\frac{1}{\gamma^{2q}+\gamma^{2q^2}}+\frac{1}{\gamma^{2q^2}+\gamma^{2}}+\frac{1}{\gamma^{2}+\gamma^{2q}}+\frac{1}{\gamma^{q^2+q}+\gamma^{2q^2}+\gamma^{q+1}+\gamma^{q^2+1}}+\\&\frac{1}{\gamma^{q^2+1}+\gamma^{2}+\gamma^{q^2+q}+\gamma^{q+1}}+\frac{1}{\gamma^{q+1}+\gamma^{q^2+1}+\gamma^{2q}+\gamma^{q^2+q}}.
\end{split}
\end{equation}
As $\Tr_{q}^{q^3}(\gamma^2)=\Tr_{q}^{q^3}(\gamma^{q+1})$, it follows that 
\begin{equation}\label{e311}
    \gamma^2+\gamma^{2q}+\gamma^{2q^2}=\gamma^{q+1}+\gamma^{q^2+q}+\gamma^{q^2+1}.
\end{equation}
By using Equation \eqref{e311} into Equation \eqref{e310}, we conclude that
$$\Tr_{q}^{q^3}(a^2+a^{q+1})=0.$$
Moreover, for $a=\frac{1}{\gamma^q+\gamma^{q^2}}$, we have 
$$\Tr_{q}^{q^3}(\gamma^qa+\gamma a^q)+1=\Tr_{q}^{q^3}\left(a(\gamma^q+\gamma^{q^2})\right)+1=0.$$  Therefore, Equation \eqref{e39} admits $q$ distinct solutions in $\F_q$ for $a=\frac{1}{\gamma^q+\gamma^{q^2}}.$ Consequently, $$f(X)=\frac{1}{\gamma^q+\gamma^{q^2}}$$ has at least $q$ solutions in $\F_{q^3}$ and thus in this case also, $f(X)$ is not a permutation polynomial over $\F_{q^3}.$

\end{proof}

In Lemma \ref{L22}, the authors established a class of permutation polynomials over $\F_{q^2}$ by taking $\gamma=1$. We now  determine precisely which elements $\gamma \in \F_q$ preserve the conclusion of Lemma \ref{L22}, when $m$ is odd.

\begin{thm}\label{T32}
    Let $q=2^m$ and $f(X)=X+\gamma \Tr_{q}^{q^2}\left(X+X^2+X^{2q-1}\right) \in \F_{q^2}[X]$ be a polynomial, where $m$ is an odd positive integer and $\gamma \in \F_q$. Then $f(X)$ is a permutation polynomial over $\F_{q^2}$ if and only if $\gamma \in \{0,1\}$.
    \end{thm}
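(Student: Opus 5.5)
The plan is the standard reduction used in the proof of Theorem \ref{T31}. For $a\in\F_{q^2}$, any solution of $f(x)=a$ has the form $x=a+t$ with $t=\gamma u\in\F_q$, where $u=\Tr_{q}^{q^2}(x+x^2+x^{2q-1})$. So $f$ permutes $\F_{q^2}$ if and only if, for every $a$, the equation
$$t=\gamma\,\Tr_{q}^{q^2}\bigl(h(a+t)\bigr),\qquad h(X)=X+X^2+X^{2q-1},$$
has exactly one solution $t\in\F_q$. The case $\gamma=0$ is trivial.

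\textbf{Computing the trace.} Write $s=a+a^q$ and $N=a^{q+1}$, and set $A=a+t$, $B=a^q+t$. Then $A+B=s$ and $AB=t^2+st+N$. Characteristic $2$ gives $\Tr_{q}^{q^2}(x)=s$ and $\Tr_{q}^{q^2}(x^2)=s^2$, and
$$\Tr_{q}^{q^2}(x^{2q-1})=\frac{B^2}{A}+\frac{A^2}{B}=\frac{A^3+B^3}{AB}=s+\frac{s^3}{AB}.$$
Hence $\Tr_{q}^{q^2}(h(x))=s^2+s^3/(t^2+st+N)$.

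\textbf{The case $a\in\F_q$.} Here $s=0$ and $x=a+t\in\F_q$, so $\Tr_{q}^{q^2}(x^{2q-1})=2x=0$, and the convention $0^{2q-1}=0$ covers $x=0$. Thus $t=0$ is the unique solution.

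\textbf{The case $a\notin\F_q$.} Put $t=sw$ and $c=N/s^2$. Since $a$ is a root of $X^2+sX+N$, which is irreducible over $\F_q$, we have $\Tr_2^{q}(c)=1$. Conversely, every pair $(s,c)$ with $s\in\F_q^*$ and $\Tr_2^q(c)=1$ arises from some $a$. The equation becomes $w=\gamma s+\gamma/D(w)$ with $D(w)=w^2+w+c$, and $D$ has no root in $\F_q$. Therefore $f$ is a PP if and only if, for every $c$ with $\Tr_2^q(c)=1$, the map
$$\varphi_c(w)=w+\frac{\gamma}{w^2+w+c}$$
is a bijection of $\F_q$. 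For $\gamma=1$ this follows from Lemma \ref{L22}, or directly from the same reduction read backwards. Note also that $D(w+1)=D(w)$, so $\varphi_c(w+1)=\varphi_c(w)+1$.

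\textbf{Necessity: the collision equation.} For $\gamma\in\F_q\setminus\{0,1\}$ I want $c$ with $\Tr_2^q(c)=1$ and $w_1\neq w_2$ with $\varphi_c(w_1)=\varphi_c(w_2)$. Since $D_1+D_2=(w_1+w_2)(w_1+w_2+1)$, after dividing by $w_1+w_2$ this becomes $D(w_1)D(w_2)=\gamma(w_1+w_2+1)$. Taking $w_1=0$ and $w_2=w\neq0$ gives
$$c\,w^2+(c+\gamma)w+c^2+\gamma=0.$$
\begin{itemize}
\item If $\Tr_2^q(\gamma)=1$, take $c=\gamma$. The equation becomes $w^2=\gamma+1$, so $w=\sqrt{\gamma+1}\neq0$, which is a collision.
\item If $\Tr_2^q(\gamma)=0$, I would first try $c=1$, which has trace $1$ because $m$ is odd. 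The equation is $w^2+(1+\gamma)w+(1+\gamma)=0$, whose root is nonzero because $\gamma\neq1$. It is solvable exactly when $\Tr_2^q\bigl(1/(1+\gamma)\bigr)=0$.
\end{itemize}

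\textbf{Main obstacle.} The remaining $\gamma$ satisfy $\Tr_2^q(\gamma)=0$ and $\Tr_2^q(1/(1+\gamma))=1$, and this subcase is where I expect the real difficulty. There I would try $c=\gamma+1$, or allow $w_1\neq0$, and reduce to a trace condition that one of the choices must satisfy. If no single explicit choice works, the fallback is a counting argument. Count the pairs $(c,w)$ with $\Tr_2^q(c)=1$ and $w\neq0$ solving the quadratic, viewed as a curve in $(c,w)$. Using Hasse--Weil or a direct character-sum estimate, show that at least one such pair exists for every $\gamma\neq0,1$, and check small $m$ separately.
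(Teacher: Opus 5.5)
Your reduction is correct. The formula $\Tr_{q}^{q^2}(h(x))=s^2+s^3/(t^2+st+N)$, the case $a\in\F_q$, the equivalence with bijectivity of $\varphi_c(w)=w+\gamma/(w^2+w+c)$ for every $c$ with $\Tr_2^q(c)=1$, and your two explicit collisions ($c=\gamma$ when $\Tr_2^q(\gamma)=1$; $c=1$ when $\Tr_2^q(1/(1+\gamma))=0$) all check out. But necessity is not proved: for the subcase $\Tr_2^q(\gamma)=0$, $\Tr_2^q(1/(1+\gamma))=1$ you only list things to try, and this subcase is not vacuous. Writing $\delta=1+\gamma$, it consists of the $\delta\neq 1$ with $\Tr_2^q(\delta)=\Tr_2^q(1/\delta)=1$. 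It is empty for $m=3$, so you do settle $m\le 3$. For $m=5$, however, it contains the ten roots of $X^5+X^4+X^2+X+1$ and $X^5+X^4+X^3+X+1$, and in general it has about $q/4$ elements.

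You also cannot borrow the paper's step here. The paper's $a$, a root of $X^2+X/\gamma+1/\gamma^2$, has in your notation $s=1/\gamma$ and $c=1$, so the target is $\gamma s=1$. Then $\varphi_1(w)=1$ reads $(w+1)(w^2+w+1)=\gamma$, i.e.\ $w^3=1+\gamma$. Since $\gcd(3,q-1)=1$ for odd $m$, this has exactly one solution. So the paper's assertion that $\gamma^2u^3+(\text{nonzero constant})=0$ has three roots is false, and that $a$ has a unique preimage.

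One way to make your counting fallback concrete is as follows. If $\beta^2+\beta=c$, then $w\mapsto w+\beta$ conjugates $\varphi_c$ to $y\mapsto y+\gamma y^{-(q+1)}$ on $\{y\in\F_{q^2}: y^q+y=1\}$. Hence whether $\varphi_c$ is bijective does not depend on $c$, and you may fix $c=1$. Putting $w=z+1+v$ turns $\varphi_1(w)=v$ into $z^3+pz+r=0$ with $p=v^2+v$ and $r=p+\delta$. For odd $m$ and $pr\neq 0$, such a cubic has exactly one root in $\F_q$ if and only if $\Tr_2^q(p^3/r^2)=0$.

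Now assume $\Tr_2^q(\gamma)=0$. Then $\Tr_2^q(\delta)=\Tr_2^q(r)=1$, and expanding $(r+\delta)^3/r^2$ gives $\Tr_2^q(p^3/r^2)=\Tr_2^q(\kappa/r)$ with $\kappa=\delta^2+\delta^{3/2}\neq 0$. So it suffices to find $r$ with $\Tr_2^q(r)=\Tr_2^q(\kappa/r)=1$. Any such $r$ differs from $\delta$, since $\Tr_2^q(\kappa/\delta)=0$. Thus $p=r+\delta$ is a nonzero trace-zero element, i.e.\ $p=v^2+v$ with $v\notin\{0,1\}$, and then $\varphi_1(w)=v$ has $0$ or $3$ solutions.

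The number of such $r$ is $\bigl(q+1+K(\kappa)\bigr)/4\ge(\sqrt q-1)^2/4>0$. This follows from Weil's bound $|K(\kappa)|\le 2\sqrt q$ for the Kloosterman sum $K(\kappa)=\sum_{r\in\F_q^*}(-1)^{\Tr_2^q(r+\kappa/r)}$. This disposes of every $\gamma\in\F_q^*$ with $\Tr_2^q(\gamma)=0$, for all odd $m$ at once, and together with your other cases completes the proof.
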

\begin{proof} Clearly for $\gamma=0$, $f(X)$ is a permutation polynomial. Therefore we may assume that $\gamma \neq 0$. For any $a \in \F_{q^2}$ consider the equation $f(X)=a$, that is,
\begin{equation}\label{e316}
X+\gamma \Tr_{q}^{q^2}\left(X+X^2+X^{2q-1}\right)=a.
\end{equation} 
 Let $u:=\Tr_{q}^{q^2}\left(X+X^2+X^{2q-1}\right) \in \F_q$, which gives us $X=a+\gamma u$. 
For $a=0$, put $X=\gamma u$ in Equation \eqref{e316} to get 
\[
\gamma u+\gamma \Tr_{q}^{q^2}\left(\gamma u +\gamma^2 u^2+\gamma^{2q-1}u^{2q-1}\right)=0, 
\]
which yields $u=0$, that is, $X=0$. Therefore, for $a=0$, $X=0$ is the only solution of Equation \eqref{e316}. We now assume that $a\neq 0$. Clearly, $X=0$ cannot be a solution of \eqref{e316} in this case. We substitute $X=a+\gamma u \neq 0$ in Equation \eqref{e316} and obtain the following equation
\[
a+\gamma u+\gamma \Tr_{q}^{q^2}\left(a+\gamma u+a^2+\gamma^2 u^2+\frac{(a+\gamma u)^{2q}}{a+\gamma u}\right)=a.
\]
Since $\gamma \neq 0$, we get
\[
u+\Tr_{q}^{q^2}\left(a+\gamma u+a^2+\gamma^2 u^2+\frac{(a+\gamma u)^{2q}}{a+\gamma u}\right)=0.
\]
Now using the fact that  $\gamma \in \F_q^{*}$ and properties of trace function, the above equation can be reduced to
\[
u+a+a^{q}+a^2+a^{2q}+\frac{a^{2q}+\gamma^2u^2}{a+\gamma u}+\frac{a^2+\gamma^2u^2}{(a+\gamma u)^q}=0,
\]
which gives 
\[
(a+\gamma u)(a+\gamma u)^q(u+a+a^q+a^2+a^{2q})+(a^{2q}+\gamma^2u^2)(a+\gamma u)^q+(a^2+\gamma^2u^2)(a+\gamma u)=0.
\]
After some simplifications, we obtain
\begin{equation*}
\begin{split}
     \gamma^2 u^3+(\gamma(a+a^q)+\gamma^2(a+a^q)^2)u^2+\left(a^{q+1}+\gamma(a^3+a^{3q}+a^{2q+1}+a^{q+2})\right)u+a^{q+2}+a^{2q+1}+a^{q+3}+\\
     a^{3q+1}+a^{3q}+a^3=0, 
    \end{split}
\end{equation*}
which is equivalent to 
\begin{equation}\label{e317}
\gamma^2 u^3+\gamma(a+a^q)(1+\gamma(a+a^q))u^2+\left(a^{q+1}+\gamma(a+a^{q})^3\right)u+(a+a^q)^2(a+a^q+a^{q+1})=0. 
\end{equation}
Note that the equation 
\(
X^2+\frac{X}{\gamma}+\frac{1}{\gamma^2}
\)
has no solution in $\F_q$ since $\Tr_{2}^{q}\left(\dfrac{\frac{1}{\gamma^2}}{\frac{1}{\gamma^2}}\right)=\Tr_{2}^{q}(1)=1$, and $\Tr_{2}^{q}(1)=1$ as $q=2^m$ and $m$ is odd. Now choose $a \in \F_{q^2}\setminus \F_q$ in Equation \eqref{e317} such that 
\begin{equation}\label{e318}
\begin{split}
a^2+\frac{a}{\gamma}+\frac{1}{\gamma^2}=0.
\end{split}
\end{equation}
Raise $q$ power to Equation \eqref{e318} and add this to  \eqref{e318} to obtain
\[
(a+a^q)^{2}+\frac{a+a^q}{\gamma}=0, 
\]
which gives that $\left(a+a^q\right)\left(a+a^q+\frac{1}{\gamma}\right)=0$. Thus, $a+a^q+\frac{1}{\gamma}=0$, i.e., $\gamma(a+a^q)+1=0$, as $a \not \in \F_q$. Moreover, $a+a^q+\frac{1}{\gamma}=0$ implies that $a^{q+1}=a^{2}+\frac{a}{\gamma}$. Thus, in Equation \eqref{e317}, the coefficient of $u^2$ is 
\[
\gamma(a+a^q)(1+\gamma(a+a^q))=0
\]
and the coefficient of $u$ is 
\[
a^{q+1}+\gamma(a+a^{q})^3=a^{2}+\frac{a}{\gamma}+\frac{1}{\gamma^2}=0.
\]
Finally, Equation \eqref{e317} reduces to 
\begin{equation}\label{e319}
\gamma^2 u^3+(a+a^q)^2\left(a+a^q+a^{q+1}\right)=\gamma^2 u^3+\frac{1}{\gamma^2}\left(\frac{1}{\gamma}+a^2+\frac{a}{\gamma}\right)=0.
\end{equation}
For $\gamma \not \in \{0,1\}$,  we have $\frac{1}{\gamma}+a^2+\frac{a}{\gamma} \neq 0$, otherwise, $\frac{1}{\gamma}+\frac{1}{\gamma^2}=0$, which gives $\gamma=0$ or $1$. Therefore, for $a$ satisfying $
X^2+\frac{X}{\gamma}+\frac{1}{\gamma^2}$, Equation \eqref{e319} has three distinct solutions and $f(X)$ is not a permutation polynomial for $\gamma \not \in \{0,1\}$. Consequently, $f(X)$ is a permutation polynomial if and only if $\gamma \in \{0,1\}$.
\end{proof}

The construction presented in Lemma \ref{L23} is obtained under the assumption $\gamma=1$. We now remove this restriction and determine $\gamma \in \F_q$ for which the conclusion of Lemma \ref{L23} holds, when $m$ is odd. This characterization is given in the following theorem.

\begin{thm}\label{T33}
Let $q = 2^m$ and $f(X)=X+\gamma \Tr_{q}^{q^2}\left(X+X^3+X^{q+2}+X^{2q-1}\right)$, where $m$ is an odd positive integer and $\gamma \in \F_q$. Then $f(X)$ is a permutation polynomial over $\mathbb{F}_{q^2}$ if and only if $\gamma \in \{0,1\}$.
\end{thm}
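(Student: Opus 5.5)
The proof will follow the pattern of Theorem~\ref{T32}. Sufficiency is immediate: $\gamma=0$ gives the identity, and $\gamma=1$ is Lemma~\ref{L23}. For necessity, fix $\gamma\in\F_q\setminus\{0,1\}$. For $a\in\F_{q^2}$, write $u:=\Tr_{q}^{q^2}(X+X^3+X^{q+2}+X^{2q-1})\in\F_q$, so that $f(X)=a$ becomes $X=a+\gamma u$. Then $f$ permutes $\F_{q^2}$ if and only if, for every $a$, the resulting equation in $u$ has exactly one solution in $\F_q$. (Any $u$ giving $X=0$ must be excluded, or checked separately as in the $a=0$ case of Theorem~\ref{T32}.)

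The key observation is that the new terms add only a constant. Since $\Tr_{q}^{q^2}(X^3+X^{q+2})=X^3+X^{2q+1}+X^{q+2}+X^{3q}=(X+X^q)^3$, and $\gamma u\in\F_q$, we get $X+X^q=a+a^q=:T$. So the extra contribution is just $T^3$, independent of $u$. Put $N:=a^{q+1}\in\F_q$. Clearing the denominator $(a+\gamma u)^{q+1}=\gamma^2u^2+\gamma Tu+N$ exactly as in Theorem~\ref{T32}, the cubic \eqref{e317} acquires the additional summand $T^3(\gamma^2u^2+\gamma Tu+N)$. This gives
\[
\gamma^2u^3+\bigl(\gamma T(1+\gamma T)+\gamma^2T^3\bigr)u^2+\bigl(N+\gamma T^3+\gamma T^4\bigr)u+T^2(T+N)+T^3N=0 .
\]
Here $(T,N)$ ranges over all pairs in $\F_q^2$ with $X^2+TX+N$ irreducible over $\F_q$, i.e.\ $T\neq0$ and $\Tr_2^{q}(N/T^2)=1$. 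Elements $a\in\F_q$ (where $T=0$) can be checked separately and cause no trouble.

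Since $m$ is odd, $\gcd(3,q-1)=1$, so a pure cubic $\gamma^2u^3=c$ has exactly one root in $\F_q$. Hence the trick used in Theorem~\ref{T32} of killing the $u^2$ and $u$ coefficients does not by itself produce a collision. Instead, I would choose $(T,N)$ so that the cubic has either no root, or at least two roots, in $\F_q$ (equivalently, a root $u_0\neq$ the $u$ giving $X=0$ together with a second root).

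First attempt: kill the constant term by taking $N=T/(1+T)$ with $T\neq0,1$, so that $u=0$ is a root; this corresponds to $X=a\neq0$, a genuine preimage. The cubic then factors as $u\cdot Q(u)$ with
\[
Q(u)=\gamma^2u^2+\bigl(\gamma T(1+\gamma T)+\gamma^2T^3\bigr)u+\Bigl(\frac{T}{1+T}+\gamma T^3+\gamma T^4\Bigr).
\]
It then suffices to find $T\in\F_q\setminus\{0,1\}$ satisfying two conditions. The first is $\Tr_2^{q}\bigl(1/(T(1+T))\bigr)=1$, so that $a\notin\F_q$. The second is that $Q$ has a root $u_1\in\F_q^*$, which happens either when the constant term of $Q$ vanishes and the $u$-coefficient is nonzero, or when the additive trace criterion for $Q$ gives $0$ and $Q$ does not have $u=0$ as a root. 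The relation $\gamma\neq 1$ must enter here, presumably by showing that for $\gamma=1$ the quadratic $Q$ is always irreducible, while for $\gamma\neq1$ some admissible $T$ works. A natural trial choice is $T=1/\gamma$, mimicking Theorem~\ref{T32}, because it annihilates $1+\gamma T$ and simplifies the coefficients.

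The main obstacle is this existence step: exhibiting an explicit $T$ (or proving by a character-sum/counting argument over $T$) that meets both trace conditions simultaneously for every $\gamma\notin\{0,1\}$. If explicit choices fail, my fallback is to count solutions $(T,u)$ of the combined system via a Weil-type bound and treat the small values of $m$ by direct computation.
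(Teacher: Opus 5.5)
Your observation that $\gcd(3,q-1)=1$ for odd $m$, so that $\gamma^2u^3=c$ has exactly one root in $\F_q$, is correct. It is also exactly where the paper's own proof breaks. The paper picks $a$ with $a^2+a/\sqrt{\gamma}+1/\gamma=0$ (so $T=1/\sqrt{\gamma}$, $N=1/\gamma$), reduces \eqref{e321} to $\gamma^2u^3+\gamma^{-3/2}(1+1/\gamma)=0$, and asserts three distinct roots. In fact that $a$ has exactly one preimage, so the paper's necessity argument fails; the paper uses the gcd fact correctly in Theorem~\ref{T36}, and Theorem~\ref{T32} has the same defect.

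Your proposal, however, does not replace the broken step. The existence, for each $\gamma\in\F_q\setminus\{0,1\}$, of an $a$ with zero or at least two preimages is left as ``the main obstacle'', with only a trial choice and an unexecuted Weil-bound fallback. That is the whole ``only if'' direction, so this is a genuine gap.

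Your cubic is also wrong. Unlike Theorem~\ref{T32}, the polynomial here has no $X^2$ term, so besides adding $T^3$ you must remove $\Tr_{q}^{q^2}(X^2)=T^2$. Indeed $\Tr_{q}^{q^2}(X+X^3+X^{q+2}+X^{2q-1})=T^3(1+N_X^{-1})$ for $X\neq 0$, and the correct equation is the paper's
\[
\gamma^2u^3+\gamma T(1+\gamma T^2)u^2+(N+\gamma T^4)u+T^3(N+1)=0 .
\]
What you wrote is the cubic for $\Tr_{q}^{q^2}(X+X^2+X^3+X^{q+2}+X^{2q-1})$. This does not change PP-ness, because $\gamma T^2$ is a translation inside each coset $\{\Tr_{q}^{q^2}(X)=T\}$, which $f$ preserves. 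But your choice $N=T/(1+T)$ kills the wrong constant term.

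With the correct cubic, your ``first attempt'' does close the gap. Take $N=1$, so that $f(a)=a$. Then $a\notin\F_q$ iff $\Tr_2^{q}(1/T)=1$, and the cubic becomes $u\,Q(u)$ with $Q(u)=\gamma^2u^2+\gamma T(1+\gamma T^2)u+1+\gamma T^4$.

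\textbf{Case $\Tr_2^{q}(\gamma)=1$.} Take $T=\gamma^{-1/4}$. Then $\Tr_2^{q}(1/T)=\Tr_2^{q}(\gamma)=1$ and $Q(0)=0$. Moreover $Q$ has the further root $T(1+\sqrt{\gamma})/\gamma\neq 0$, since $\gamma\neq 1$.

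\textbf{Case $\Tr_2^{q}(\gamma)=0$.} Every $T$ with $\Tr_2^{q}(1/T)=1$ has $Q(0)\neq 0$ and $1+\gamma T^2\neq 0$, because $\gamma^{1/4}$ and $\gamma^{1/2}$ have trace $0$. Put $\mu=1/T+\sqrt{\gamma}$. Then the root criterion $\Tr_2^{q}\bigl((1+\gamma T^4)/(T^2(1+\gamma T^2)^2)\bigr)=0$, together with $\Tr_2^{q}(1/T)=1$, becomes $\Tr_2^{q}(\mu)=1$ and $\Tr_2^{q}(c/\mu)=1$, where $c=\gamma^{3/4}(1+\gamma^{1/4})\neq 0$. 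Such a $\mu$ exists by pigeonhole. The map $\mu\mapsto c/\mu$ is injective, there are $q/2$ elements of absolute trace $1$, and there are only $q/2-1$ nonzero elements of trace $0$.

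In both cases $a$ has at least two preimages, and no character-sum estimate is needed.
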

\begin{proof}
It is evident that \(f(X)\) is a permutation polynomial when \(\gamma=0\). Hence, throughout the remainder of the proof, we assume that \(\gamma\neq 0\). By Lemma~\ref{L23}, \(f(X)\) is a permutation polynomial when \(\gamma=1\). Therefore, it suffices to show that \(f(X)\) is not a permutation polynomial for \(\gamma\notin\{0,1\}\).

To this end, consider the equation
\begin{equation}\label{e320}
X+\gamma \Tr_{q}^{q^2}\left(X+X^3+X^{q+2}+X^{2q-1}\right)=a,
\end{equation}
where \(a\in\F_{q^2}^{*}\). We will show that, for every \(\gamma\notin\{0,1\}\), there exists an element \(a\in\F_{q^2}^{*}\) such that Equation \eqref{e320} admits more than one solution in \(\F_{q^2}\). This implies that \(f(X)\) is not injective and hence cannot be a permutation polynomial.

Assume that $u:=\Tr_{q}^{q^2}\left(X+X^3+X^{q+2}+X^{2q-1}\right) \in \F_q$, and $X=a+\gamma u$. Since for $a \neq 0$, $X=0$ cannot be a solution of Equation \eqref{e320}. Therefore, in this case we have $X=a+\gamma u \neq 0$ and substitute $X=a+\gamma u$ into Equation \eqref{e320} to obtain
\[
u+\Tr_{q}^{q^2}\left(\gamma u+a+(\gamma^2 u^2+a^2)(\gamma u+a)+(\gamma u+a^q)(\gamma^2 u^2+a^2)+\frac{\gamma^2u^2+a^{2q}}{\gamma u+a}\right)=0
\]
as $\gamma \neq 0$.
The above equation gives 
\[
u+\Tr_{q}^{q^2}(a+a^3+a^{q+2})+\frac{\gamma^2 u^2+a^{2q}}{\gamma u+a}+\frac{\gamma^2 u^2+a^2}{\gamma u +a^q}=0.
\]
After doing some simplifications, we get 
\begin{equation*}
\begin{split}
&u^3\gamma^2+u^{2} \gamma (a+a^q)(1+\gamma (a^2+a^{2q}+1)+\gamma)+u\left(a^{q+1}+\gamma (a+a^q)^2(a^2+a^{2q}+1)+\gamma a^{2q}+\gamma a^2\right)\\&+(a+a^q)^3(a^{q+1}+1)=0.
\end{split}
\end{equation*}
This implies that
\begin{equation}\label{e321}
\begin{split}
u^3\gamma^2+u^{2} \gamma (a+a^q)(1+\gamma (a+a^{q})^2)+u\left(a^{q+1}+\gamma (a+a^q)^4\right)+(a+a^q)^3(a^{q+1}+1)=0.
\end{split}
\end{equation}
Since $\Tr_{2}^{q}\left(\dfrac{\frac{1}{\gamma}}{\frac{1}{\gamma}}\right)=\Tr_{2}^{q}(1)\neq 0$, the equation $X^2+\frac{X}{\sqrt{\gamma}}+\frac{1}{\gamma}=0$ has no solutions in $\F_q$. Thus, we may choose $a \in \F_{q^2}\setminus \F_q$ such that
\begin{equation}\label{E31}
    a^2+\frac{a}{\sqrt{\gamma}}+\frac{1}{\gamma}=0.
\end{equation}

Raise $q$ power to Equation \eqref{E31} and add this to  \eqref{E31} to get
\[
(a+a^q)^{2}+\frac{a+a^q}{\sqrt{\gamma}}=0, 
\]
which gives that $\left(a+a^q\right)\left(a+a^q+\frac{1}{\sqrt{\gamma}}\right)=0$. Thus, $a+a^q+\frac{1}{\sqrt{\gamma}}=0$, i.e., $\sqrt{\gamma}(a+a^q)+1=0$, as $a \not \in \F_q$.
Moreover, 
\[
a+a^{q}=\frac{1}{\sqrt{\gamma}}
\] 
implies $a^{q+1}=a^2+\frac{a}{\sqrt{\gamma}}$. Substituting these expressions into equation \eqref{e321}, we find that both the coefficient of \(u^2\) and the coefficient of \(u\) vanish. Thus, Equation \eqref{e321} becomes 
\[
u^3 \gamma^2+\frac{1}{\gamma^{\frac{3}{2}}}\left(a^2+\frac{a}{\sqrt{\gamma}}+1\right)=0.
\]
The constant term of the above equation is also nonzero. Indeed, if it was zero, then
\(
a^2+\frac{a}{\sqrt{\gamma}}+1=0,
\)
would imply that $\gamma=1$, contradicting the assumption that $\gamma\notin\{0,1\}$. Hence, the equation has three distinct solutions. Consequently, there exists an element $a\in\F_{q^2}^{*}$ with three distinct pre-images under $f$, implying that $f(X)$ is not a permutation polynomial for $\gamma\notin\{0,1\}$. This completes the proof.

\end{proof}
\section{Permutation polynomials over $\F_{q^n}$}\label{S4}

The construction in Lemma~\ref{L24} is established under the assumption that 
$\gamma \in \mathbb{F}_{q} \setminus \{0,1\}$. The following theorem characterizes precisely those values of $\gamma$ for which the conclusion of Lemma \ref{L24} remains true. Furthermore, we provide the compositional inverse.

 \begin{thm}\label{T34}
Let $q=2^m$ and $f(X)=X+\gamma \Tr_{q}^{q^n}\left(X^{\frac{q+q^2}{2}}\right)$ be a polynomial over $\F_{q^n}$, where $m$ is a positive integer and $n$ is an odd positive integer. Then $f(X)$ is a permutation polynomial if and only if $\gamma \in \F_{q} \setminus\{1\}$. Moreover, $f^{-1}(X)=X+\frac{\gamma}{\gamma+1}\Tr_{q}^{q^n}\left(X^{\frac{q+q^2}{2}}\right)$.
\end{thm}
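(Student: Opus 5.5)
The plan is to reduce $f(X)=a$ to an equation over $\F_q$ in $u:=\Tr_{q}^{q^n}(X^k)$, with $k=\frac{q+q^2}{2}$, as in the proofs of Theorems \ref{T31}--\ref{T33}. Everything rests on one structural identity for $X^k$. Since $q=2^m$, we have $k=\frac{q}{2}(q+1)$, so
\[
X^k=\bigl(X^{q+1}\bigr)^{q/2}.
\]
Hence for $x\in\F_{q^n}$ and $c,d\in\F_q$,
\[
(x+c\,d)^k=\bigl(x^{q+1}+c(xd^q+x^qd)+c^2d^{q+1}\bigr)^{q/2}.
\]
Because raising to the power $q/2$ is a field automorphism that commutes with $\Tr_{q}^{q^n}$, and because $c^{q/2}=c^{1/2}$ for $c\in\F_q$, I will record
\[
\Tr_{q}^{q^n}\bigl((x+cd)^k\bigr)=\Tr_{q}^{q^n}(x^k)+\sqrt{c}\,\Tr_{q}^{q^n}(xd^q+x^qd)^{q/2}+c\,\Tr_{q}^{q^n}(d^{q+1})^{q/2}.
\]

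\emph{Case $\gamma\in\F_q$.} Take $d=1$ in the identity. The middle term vanishes because $\Tr_{q}^{q^n}(x+x^q)=0$. The last term equals $c\cdot n^{q/2}=c$, since $n$ is odd. This gives the key identity
\[
\Tr_{q}^{q^n}\bigl((x+c)^k\bigr)=\Tr_{q}^{q^n}(x^k)+c \qquad (c\in\F_q).
\]
Now suppose $f(X)=a$. Then $X=a+\gamma u$ with $u\in\F_q$, and the key identity turns the equation into
\[
(1+\gamma)\,u=\Tr_{q}^{q^n}(a^k).
\]
If $\gamma\neq1$, this has the unique solution $u=\Tr_{q}^{q^n}(a^k)/(1+\gamma)$, so $f$ is a permutation. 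Substituting back gives $X=a+\frac{\gamma}{\gamma+1}\Tr_{q}^{q^n}(a^k)$, which is the stated inverse. If $\gamma=1$, the key identity gives $f(X+1)=X+1+\Tr_{q}^{q^n}(X^k)+1=f(X)$, so $f$ is not injective.

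\emph{Case $\gamma\notin\F_q$.} I will show $f$ is never a permutation; note that $n\ge3$ here. Write $X=a+\gamma u$ and set $s=\sqrt{u}$. The general identity with $d=\gamma$ turns $f(X)=a$ into
\[
(1+C)s^2+B(a)s+A(a)=0,
\]
where
\[
A(a)=\Tr_{q}^{q^n}(a^k),\qquad B(a)=\Tr_{q}^{q^n}(a\gamma^q+a^q\gamma)^{q/2},\qquad C=\Tr_{q}^{q^n}(\gamma^{q+1})^{q/2}.
\]
The map $a\mapsto\Tr_{q}^{q^n}(a\gamma^q+a^q\gamma)=\Tr_{q}^{q^n}\bigl(a(\gamma^q+\gamma^{q^{n-1}})\bigr)$ is $\F_q$-linear. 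It is not identically zero, since $\gamma^{q^2}\neq\gamma$ when $n$ is odd and $\gamma\notin\F_q$. It also has a nonzero kernel of $\F_q$-dimension $n-1\ge 2$. Two facts about shifting $a$ by $c\in\F_q$ are crucial. First, $B(a+c)=B(a)$, because $\Tr_{q}^{q^n}(\gamma^q+\gamma)=0$. Second, $A(a+c)=A(a)+c$, by the key identity.

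If $C\neq1$, I pick $a$ with $B(a)\neq0$ and replace it by $a+A(a)$, which makes $A=0$ and leaves $B$ unchanged. The equation then has the two distinct roots $s=0$ and $s=B/(1+C)$. If $C=1$, I pick $a$ in the kernel with $B(a)=0$ and again shift to make $A=0$. Then every $s\in\F_q$ is a root, giving $q$ preimages. In either subcase $f$ is not injective.

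The main obstacle is this non-$\F_q$ case, specifically securing $A=0$ and the right value of $B$ simultaneously. The shift invariance of $B$ under $a\mapsto a+c$ is what resolves it, and I would double-check that computation and the claim that $\gamma^q+\gamma^{q^{n-1}}\neq0$.
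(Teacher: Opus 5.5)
Your proof is correct and follows essentially the paper's route: the same substitution $X=a+\gamma u$, the same quadratic (yours in $\sqrt{u}$, the paper's squared form in $u$), and the same case split. Your explicit collision $f(X+1)=f(X)$ for $\gamma=1$ and your check that $\gamma^q+\gamma^{q^{n-1}}\neq 0$ (which the paper leaves implicit) are small improvements, though your general identity should be stated for $c\in\F_q$ and $d\in\F_{q^n}$, since you apply it with $d=\gamma$. The step you flag as the main obstacle is not needed: for any $a$ with $B(a)\neq 0$ the quadratic $(1+C)s^2+B(a)s+A(a)$ is separable, so it has $0$ or $2$ roots in $\F_q$ and never exactly one, which already shows $f$ is not a permutation (this is how the paper argues); the shift $a\mapsto a+A(a)$ merely upgrades this to an explicit collision.
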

\begin{proof}
    Clearly, $f(X)$ is a permutation polynomial over $\F_{q^n}$ when $\gamma=0$, and therefore, we consider $\gamma \neq 0.$ We will show that $f(X)=a$ has a unique solution in $\F_{q^n}$ if and only if $\gamma \in \F_{q} \setminus\{1\}$. For $f(X)=a$, we have 
    \begin{equation}\label{e31}
        X+\gamma \Tr_{q}^{q^n}\left(X^{\frac{q+q^2}{2}}\right)=a.
    \end{equation}
  Let $\frac{X+a}{\gamma}=\Tr_{q}^{q^n}\left(X^{\frac{q+q^2}{2}}\right):=u \in \F_q$. We put $X=u\gamma+a$ in Equation \eqref{e31} to get 
    \begin{equation*}
        u+\Tr_{q}^{q^n}\left(u\gamma^{\frac{q^2+q}{2}}+u^{\frac{1}{2}}\gamma^{\frac{q^2}{2}}a^{\frac{q}{2}}+u^{\frac{1}{2}}\gamma^{\frac{q}{2}}a^{\frac{q^2}{2}}+a^{\frac{q^2+q}{2}}\right)=0,
    \end{equation*}
    which is equivalent to 
    \begin{equation*}
        u\left(\Tr_{q}^{q^n}\left(\gamma^{\frac{q^2+q}{2}}\right)+1\right)+u^{\frac{1}{2}}\Tr_{q}^{q^n}\left(\gamma^{\frac{q^2}{2}}a^{\frac{q}{2}}+\gamma^{\frac{q}{2}}a^{\frac{q^2}{2}}\right)+\Tr_{q}^{q^n}\left(a^{\frac{q^2+q}{2}}\right)=0
    \end{equation*}
    or
    \begin{equation}\label{e32}
        u^2\left(\Tr_{q}^{q^n}(\gamma^{q^2+q})+1\right)+u\Tr_{q}^{q^n}\left(\gamma^{q^2}a^{q}+\gamma^qa^{q^2}\right)+\Tr_{q}^{q^n}\left(a^{q^2+q}\right)=0.
    \end{equation}
   $f(X)$ is a permutation polynomial over $\F_{q^n}$ if and only if Equation \eqref{e32} has a unique solution in $\F_q$ for each $a \in \F_{q^n}$. We prove this by the following cases.

    \textbf{Case 1.} Let $\gamma=1$. For any $a \in \F_{q^n}$ satisfying $$\Tr_{q}^{q^n}\left(a^{q^2+q}\right)=0,$$ we have more than one solution of Equation \eqref{e32}. Therefore, $f(X)$ is not a permutation polynomial over $\F_{q^n}$ if $\gamma=1.$
    
  \textbf{Case 2.} If $\gamma \in \F_{q}^* \setminus\{1\}$. Then Equation \eqref{e32} implies
  \begin{equation*}
      u^2(\gamma^2+1)+\Tr_{q}^{q^n}\left(a^{q^2+q}\right)=0,
  \end{equation*}
  which yields that 
  $$u=\frac{\Tr_{q}^{q^n}\left(a^{\frac{q^2+q}{2}}\right)}{\gamma+1}$$  
  is a unique solution of Equation \eqref{e32}. Thus, $f(X)=a$ has a unique solution in $\F_{q^n}$ for $\gamma \in \F_{q}^* \setminus\{1\}.$

  \textbf{Case 3.} Suppose that $\gamma \in \F_{q^n} \setminus \F_q$. If $$\Tr_{q}^{q^n}(\gamma^{q^2+q})=1,$$ then Equation \eqref{e32} is trivially satisfied for $a=0$. Consequently, when $a=0$, Equation \eqref{e32} admits $q$ distinct solutions in $\F_q$. This implies that Equation \eqref{e31}, $f(X)=0$, has more than one solutions in $\F_{q^n}$. Hence, $f(X)$ cannot be a permutation polynomial. Now, assume that $$\Tr_{q}^{q^n}(\gamma^{q^2+q})\neq1.$$ For any $\gamma \in \F_{q^n} \setminus \F_q$, there are at most $q^{n-1}$ elements $a \in \F_{q^n}$ satisfying $$\Tr_{q}^{q^n}(\gamma^{q^2}a^{q}+\gamma^qa^{q^2})=0.$$ 
  Thus, for any $\gamma \in \F_{q^n} \setminus \F_q$, there exists $a \in \F_{q^n}$ such that $$\Tr_{q}^{q^n}(\gamma^{q^2}a^{q}+\gamma^qa^{q^2})\neq 0,$$
  and  Equation \eqref{e32} has either two distinct solutions or no solution for such $a$. This again implies that $f(X)$ fails to be a permutation polynomial. Hence, $f(X)$ is a permutation polynomial over $\F_{q^n}$ if and only if $\gamma \in \F_{q} \setminus\{1\}$.

  Next, we show that $g(X)=X+\frac{\gamma}{\gamma+1}\Tr_{q}^{q^n}\left(X^{\frac{q+q^2}{2}}\right)$ is a compositional inverse of $f(X)$. Assume that $g(X)=X+b$, where $b=\frac{\gamma}{\gamma+1}\Tr_{q}^{q^n}\left(X^{\frac{q+q^2}{2}}\right) \in \F_q.$ Now, 

\begin{equation*}
    \begin{split}
        f(g(X))=&X+b+\gamma \Tr_{q}^{q^n}\left((X+b)^{\frac{q+q^2}{2}}\right)
        \\=&X+b+\gamma \Tr_{q}^{q^n}\left(X^{\frac{q+q^2}{2}}+b\right)
        \\=&X+b(\gamma+1)+\gamma \Tr_{q}^{q^n}\left(X^{\frac{q+q^2}{2}}\right)
        \\=&X+\gamma \Tr_{q}^{q^n}\left(X^{\frac{q+q^2}{2}}\right)+\gamma \Tr_{q}^{q^n}\left(X^{\frac{q+q^2}{2}}\right)
        \\=&X.
    \end{split}
\end{equation*}
Hence, $f^{-1}(X)=X+\frac{\gamma}{\gamma+1}\Tr_{q}^{q^n}\left(X^{\frac{q+q^2}{2}}\right)$.
\end{proof}
The following theorem completely characterizes the values of $\gamma$
 for which the conclusion of Lemma \ref{L25} holds and explicitly determines the corresponding compositional inverse.
\begin{thm}\label{T35}
    Let $q=2^m$, where $m$ is a positive integer. Let $f(X)=X+\gamma\Tr_{q}^{q^n}\left(X^3+X^{q+2}\right)$ be a polynomial over $\F_{q^n}$, where $n$ is a positive integer. Then $f(X)$ is a permutation polynomial over $\F_{q^n}$ if and only if $\gamma \in \F_{q}$. Moreover, $f(X)$ is an involution over $\F_{q^n}.$
\end{thm}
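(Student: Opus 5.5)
The plan is to use the same substitution as in the proofs of Theorems \ref{T31} and \ref{T34}. For $a\in\F_{q^n}$, the equation $f(X)=a$ is equivalent to $X=a+\gamma u$ with $u:=\Tr_{q}^{q^n}(X^3+X^{q+2})\in\F_q$. In characteristic $2$ we have
\[
(a+\gamma u)^3=a^3+\gamma a^2u+\gamma^2 a u^2+\gamma^3u^3,
\]
\[
(a+\gamma u)^{q+2}=(a^q+\gamma^q u)(a^2+\gamma^2u^2)=a^{q+2}+\gamma^q a^2u+\gamma^2a^qu^2+\gamma^{q+2}u^3 .
\]
Since $u\in\F_q$ can be pulled out of the trace, $f(X)=a$ becomes the cubic
\[
A u^3+B(a)u^2+C(a)u+D(a)=0,
\]
where $A=\Tr_{q}^{q^n}(\gamma^3+\gamma^{q+2})$, $B(a)=\Tr_{q}^{q^n}(\gamma^2(a+a^q))$, $C(a)=1+\Tr_{q}^{q^n}((\gamma+\gamma^q)a^2)$ and $D(a)=\Tr_{q}^{q^n}(a^3+a^{q+2})$. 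Thus $f$ permutes $\F_{q^n}$ if and only if this cubic has exactly one root $u\in\F_q$ for every $a$.

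\emph{Sufficiency and the involution property.} Let $\gamma\in\F_q$. Then $\gamma^{q}=\gamma$, so $A=\Tr_{q}^{q^n}(2\gamma^3)=0$. Also $B(a)=\gamma^2\bigl(\Tr_{q}^{q^n}(a)+\Tr_{q}^{q^n}(a^q)\bigr)=0$ and $C(a)=1+2\gamma\Tr_{q}^{q^n}(a^2)=1$. Hence $u=D(a)$ is the unique root, and $f$ is a permutation. The same computation shows that for $\gamma\in\F_q$ and $u\in\F_q$,
\[
\Tr_{q}^{q^n}\bigl(h(x+\gamma u)\bigr)=\Tr_{q}^{q^n}(h(x)),\qquad h(X)=X^3+X^{q+2}.
\]
Taking $u=\Tr_{q}^{q^n}(h(x))$, we get $f(f(x))=x+\gamma u+\gamma u=x$, so $f$ is an involution.

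\emph{Necessity.} Let $\gamma\notin\F_q$. I would first take $a=0$, which reduces the cubic to $Au^3+u=0$. If $A\neq0$, then $u=0$ and $u=A^{-1/2}$ are two distinct roots (every element of $\F_q$ is a square), so $f$ is not injective. If $A=0$, I will look for $a$ with $B(a)=0$ and $C(a)=0$. For such an $a$ the equation collapses to $D(a)=0$, which has either $q$ roots or none in $\F_q$, so $f$ fails to be bijective in either case. Both conditions are linear in $a$:
\begin{itemize}
\item $B(a)=\Tr_{q}^{q^n}(\beta a)$ with $\beta=\gamma^2+\gamma^{2q^{n-1}}$;
\item $C(a)=0$ is equivalent to $\Tr_{q}^{q^n}(\delta a)=1$ with $\delta=(\gamma+\gamma^q)^{1/2}$, because $\Tr_{q}^{q^n}((\gamma+\gamma^q)a^2)=\bigl(\Tr_{q}^{q^n}(\delta a)\bigr)^2$.
\end{itemize}
Since $\gamma\notin\F_q$, $\delta\neq0$. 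The system $\Tr_{q}^{q^n}(\beta a)=0$, $\Tr_{q}^{q^n}(\delta a)=1$ is solvable whenever $\delta\notin\F_q\beta$, by nondegeneracy of the trace form.

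\emph{Main obstacle.} The hard step is the degenerate case $A=0$ and $\delta=c\beta$ with $c\in\F_q$; this also includes $\beta=0$. Using $\beta=(\gamma+\gamma^{q^{n-1}})^2=\delta^{4q^{n-1}}$, the relation reads $\delta=c\,\delta^{4q^{n-1}}$, which sharply restricts $\gamma$. I would first try to show that this relation together with $A=0$ forces $\gamma\in\F_q$, giving a contradiction. If that does not work, the cubic becomes $bu^2+(1+c^2b^2)u+D(a)=0$ with $b=B(a)$. I would then choose $a$ with $b\ne0$ and $1+c^2b^2\ne0$, and show by an additive-character count on $D(a)$ that the trace-$0$ condition of this quadratic holds for some such $a$, which produces two preimages. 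If the count is not available, the fallback is to choose $a$ for which the trace condition fails, so that this $a$ has no preimage at all.
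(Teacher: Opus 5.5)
Your reduction to the cubic, the sufficiency and involution arguments, the $a=0$ step when $A\neq0$, and the linear-algebra step when $\beta,\delta$ are $\F_q$-independent are all correct. Also, $\beta\neq0$ automatically: $\beta=0$ means $\gamma^{q^{n-1}}=\gamma$, hence $\gamma\in\F_q$.

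The gap is the degenerate case you leave open, and it is not a rare corner. For $q=4$ one has $\beta=\delta^{4q^{n-1}}=\delta^{q^n}=\delta$ for every $\gamma$, so $B(a)=0$ forces $C(a)=1$ and your main route never applies. Your first idea for this case is false in general. If $4\mid n$ and $\gamma\in\F_{q^2}\setminus\F_q$, then $e=\gamma+\gamma^q\in\F_q^*$ and $\delta=e^{1/2}\in\F_q$, so $\delta=\delta^{-3}\beta$ is degenerate. Moreover $A=e\,\Tr_{q}^{q^n}(\gamma^2)=(n/2)e^3=0$, yet $\gamma\notin\F_q$.

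What you are missing is this: whenever $B(a)\neq0$ and $C(a)\neq0$, the quadratic $B(a)u^2+C(a)u+D(a)$ is separable (characteristic $2$, nonzero linear coefficient). So it has $0$ or $2$ roots in $\F_q$, and \emph{either} outcome contradicts bijectivity; no character sum is needed. In the degenerate case $C(a)=(1+cB(a))^2$ and $B$ maps onto $\F_q$. Hence any $a$ with $B(a)\notin\{0,c^{-1}\}$ works, and such $a$ exist as soon as $q\ge4$. This is the paper's argument, which needs no case split on $\beta,\delta$: it discards the at most $2q^{n-1}$ elements $a$ with $B(a)=0$ or $C(a)=0$ and uses any remaining $a$.

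For $q=2$ that count leaves nothing, since $2q^{n-1}=q^n$. No argument can close the remaining case, because the ``only if'' direction is false there. Take $q=2$, $n=4$, $\gamma=\omega\in\F_4\setminus\F_2$, and $T(X)=\Tr_2^{16}(X^3+X^4)=\Tr_2^{16}(X^3+X)$. Then $T(X+\omega)+T(X)=\Tr_2^{16}(X^2\omega)+\Tr_2^{16}(X\omega^2)+\Tr_2^{16}(1)+\Tr_2^{16}(\omega)=0$. This holds because $\Tr_2^{16}(X^2\omega)=\Tr_2^{16}(X\omega^{8})=\Tr_2^{16}(X\omega^2)$ and $\Tr_2^{16}(1)=\Tr_2^{16}(\omega)=0$. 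If $f(X)=f(Y)$, then $X+Y=\omega(T(X)+T(Y))\in\{0,\omega\}$. If $X+Y=\omega$, then $T(Y)=T(X)$, which forces $X+Y=0$, a contradiction. So $f$ permutes $\F_{16}$ although $\gamma\notin\F_q$. In your notation this is $A=0$, $\beta=\delta=c=1$, and over $\F_2$ the cubic collapses to $u=D(a)$ for every $a$.

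Here your $B=C=0$ route is actually the useful one. Over $\F_2$ the degenerate relation reads $\gamma^2+\gamma=1$, and then $A=(n/2)e^3$ with $e=1$ vanishes exactly when $4\mid n$. So for $q=2$, your argument plus the $a=0$ step covers every $\gamma\notin\F_2$ except $\gamma\in\F_4\setminus\F_2$ with $4\mid n$. Thus the statement, and your proof, must either assume $m\ge2$ or exclude this family. With that correction, the separability observation above completes your proof.
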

\begin{proof}
    $f(X)$ is a permutation polynomial over $\F_{q^n}$ when $\gamma=0$. Henceforth, we assume that $\gamma \neq 0$ and show that, for each $a \in \F_{q^n}$, $f(X)=a$  has a unique solution in $\F_{q^n}$ if and only if $\gamma \in \F_{q}$. For $f(X)=a$, we have
    \begin{equation}\label{e33}
        X+\gamma\Tr_{q}^{q^n}\left(X^3+X^{q+2}\right)=a.
    \end{equation}
  Let $\frac{X+a}{\gamma}=\Tr_{q}^{q^n}(X^3+X^{q+2}) :=u\in \F_q.$ Substituting $X=u\gamma+a$ into Equation \eqref{e33} yields
    \begin{equation*}
        u+\Tr_{q}^{q^n}\left(u^3\gamma^3+u^2\gamma^2a+u\gamma a^2+a^3+u^3\gamma^{q+2}+u\gamma^qa^2+u^2\gamma^2a^q+a^{q+2}\right)=0.
    \end{equation*}
    The above equation is equivalent to 
    \begin{equation}\label{e34}
        u^3\Tr_{q}^{q^n}\left(\gamma^3+\gamma^{q+2}\right)+u^2\Tr_{q}^{q^n}\left(\gamma^2a+\gamma^2a^q\right)+u(\Tr_{q}^{q^n}(\gamma a^2+\gamma^qa^2)+1)+\Tr_{q}^{q^n}(a^3+a^{q+2})=0.
    \end{equation}
   $f(X)$ is a permutation polynomial over $\F_{q^n}$ if and only if Equation \eqref{e34} has a unique solution in $\F_q$ for any $a \in \F_{q^n}$. We now proceed to prove the result by considering the following two cases.
    
    \textbf{Case 1.} If $\gamma \in \F_q$, then $$u=\Tr_{q}^{q^n}(a^3+a^{q+2})$$ is a unique solution of Equation \eqref{e34}. Hence, Equation \eqref{e33}, $f(X)=a$ has a unique solution in $\F_{q^n}$ for any $a \in \F_{q^n}.$

    \textbf{Case 2.} Let $\gamma \in \F_{q^n} \setminus \F_{q}.$ Suppose that 
    $$\Tr_{q}^{q^n}\left(\gamma^3\right)\neq \Tr_{q}^{q^n}\left(\gamma^{q+2}\right).$$ 
    Then, for $a\in \F_q$, Equation \eqref{e34} admits exactly two distinct solutions $$u=0 \text{ and } u=\frac{1}{\Tr_{q}^{q^n}(\gamma^3+\gamma^{q+2})^{\frac{1}{2}}}.$$
    Therefore, if $\Tr_{q}^{q^n}(\gamma^3)\neq \Tr_{q}^{q^n}(\gamma^{q+2})$ and $a\in \F_q$, then Equation \eqref{e33} has more than one solutions in $\F_{q^n}.$ Now assume that 
    $$\Tr_{q}^{q^n}\left(\gamma^3\right)=\Tr_{q}^{q^n}\left(\gamma^{q+2}\right).$$
    In this case, Equation \eqref{e34} reduces to 
    \begin{equation}\label{e35}
        u^2\Tr_{q}^{q^n}(\gamma^2a+\gamma^2a^q)+u\left(\Tr_{q}^{q^n}(\gamma a^2+\gamma^qa^2)+1\right)+\Tr_{q}^{q^n}\left(a^3+a^{q+2}\right)=0.
    \end{equation}
    For fixed $\gamma \in \F_{q^n} \setminus \F_q$, define the set 
    $$S=\{a \in \F_{q^n} \mid \Tr_{q}^{q^n}(\gamma^2 a+\gamma^2a^q)=0 \text{ or } \Tr_{q}^{q^n}(\gamma a^2+\gamma^q a^2)=1\}.$$
    It is clear that $|S|\leq 2q^{n-1}$. For any $a \in \F_{q^n} \setminus S$, the quadratic Equation \eqref{e35} has either two distinct solutions or no solution in $\F_q$. Therefore, in this case, for all $a \in \F_{q^n} \setminus S$, the equation $f(X)=a$ has either two distinct solutions or no solution in $\F_{q^n}.$

    We now show that $f(X)$ is an involution when $\gamma \in \F_{q}.$ Let $u=\Tr_{q}^{q^n}(X^3+X^{q+2})$. Then 
    $$f(f(X))=X+\gamma u+\gamma \Tr_{q}^{q^n}((X+\gamma u)^3+(X+\gamma u)^{q+2}).$$
 A direct computation yields
    $$X+\gamma u+\gamma \Tr_{q}^{q^n}((X+\gamma u)^3+(X+\gamma u)^{q+2})=X+\gamma \Tr_{q}^{q^n}(X\gamma^2u^2+X^q\gamma^2u^2).$$
    Since $\gamma,u \in \F_q$,
    $$X+\gamma \Tr_{q}^{q^n}(X\gamma^2u^2+X^q\gamma^2u^2)=X.$$
     Hence, $f(f(X))=X$ and therefore $f(X)$ is an involution over
$\F_{q^n}$ for all $\gamma \in \F_q$.
This completes the proof.
\end{proof}

\begin{thm}\label{T36}
    Let $q=2^m$, where $m$ is a positive integer. Let $f(X)=X+\gamma \Tr_{q}^{q^n}(c_1X+c_2X^2+X^2\Tr_{q}^{q^n}(X))\in \F_{q^n}[X]$, where $c_1, c_2 \in \F_q$. Then $f(X)$ is a permutation polynomial over $\F_{q^n}$ if and only if one of the following conditions holds
    \begin{itemize}
        \item $\Tr_{q}^{q^n}(\gamma)=0$,
        \item $\Tr_{q}^{q^n}(\gamma)(c_1+{c_2}^2)=1$ and m is odd.
    \end{itemize}
\end{thm}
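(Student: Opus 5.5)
The plan is to reduce the problem on $\F_{q^n}$ to a one-variable cubic problem on $\F_q$. The key observation is that the argument of the outer trace collapses onto $\Tr_{q}^{q^n}(X)$. Write $T=\Tr_{q}^{q^n}(X)\in\F_q$. Since $q$ is even and $T\in\F_q$, we have $\Tr_{q}^{q^n}(X^2)=T^2$ and $\Tr_{q}^{q^n}(X^2T)=T\cdot T^2=T^3$. Hence
\[
f(X)=X+\gamma\,\varphi\big(\Tr_{q}^{q^n}(X)\big),\qquad \varphi(T)=c_1T+c_2T^2+T^3\in\F_q[T].
\]
Put $t=\Tr_{q}^{q^n}(\gamma)$.

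Next I will show that $f$ permutes $\F_{q^n}$ if and only if
\[
\psi(T)=T+t\,\varphi(T)
\]
permutes $\F_q$. Consider the equation $f(X)=a$ in the same spirit as the earlier proofs. Taking $\Tr_{q}^{q^n}$ of both sides gives $\psi(T)=\Tr_{q}^{q^n}(a)$.
\begin{itemize}
\item Conversely, for any $T\in\F_q$ with $\psi(T)=\Tr_{q}^{q^n}(a)$, set $X=a+\gamma\varphi(T)$. Then $\Tr_{q}^{q^n}(X)=\Tr_{q}^{q^n}(a)+t\varphi(T)=T$, using characteristic $2$. So this $X$ is a solution, and it is the only solution with trace $T$.
\item Therefore the solutions of $f(X)=a$ are in bijection with the roots of $\psi(T)=\Tr_{q}^{q^n}(a)$ in $\F_q$.
\item As $a$ ranges over $\F_{q^n}$, the value $\Tr_{q}^{q^n}(a)$ ranges over all of $\F_q$, which gives the equivalence.
\end{itemize}
If $t=0$, then $\psi(T)=T$ and $f$ is a permutation, which is the first condition of the theorem.

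Now suppose $t\neq 0$. Then
\[
\psi(T)=t\big(T^3+c_2T^2+(c_1+t^{-1})T\big).
\]
Substitute $T=S+c_2$ to remove the quadratic term. In characteristic $2$, a direct expansion gives
\[
T^3+c_2T^2=S^3+c_2^2S+\text{const}.
\]
Up to the nonzero factor $t$ and an additive constant, $\psi$ therefore becomes $S^3+bS$ with $b=c_1+c_2^2+t^{-1}$.

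It remains to decide when $S^3+bS$ permutes $\F_{2^m}$.
\begin{itemize}
\item If $b\neq 0$, then $S^3+bS=S(S^2+b)$ has the two distinct roots $0$ and $\sqrt b$, so it is not a permutation.
\item If $b=0$, then $S^3$ permutes $\F_{2^m}$ if and only if $\gcd(3,2^m-1)=1$, which holds exactly when $m$ is odd.
\end{itemize}
The condition $b=0$ is equivalent to $t(c_1+c_2^2)=1$. Combining the cases yields exactly the two listed conditions.

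The main point needing care is the bijection step. One must check that, for every root $T$ of $\psi(T)=\Tr_{q}^{q^n}(a)$, the element $X=a+\gamma\varphi(T)$ actually has trace $T$, and that distinct roots give distinct $X$. The second part holds because distinct roots give distinct traces. The rest is routine algebra.
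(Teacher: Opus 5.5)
Your proof is correct and follows essentially the paper's argument. The paper substitutes $X=a+\gamma u$ with $u=\Tr_{q}^{q^n}(c_1X+c_2X^2+X^2\Tr_{q}^{q^n}(X))\in\F_q$, obtains a cubic in $u$, and depresses it via $u=(y+c_2+\Tr_{q}^{q^n}(a))/\Tr_{q}^{q^n}(\gamma)$, where $y=\Tr_{q}^{q^n}(X)+c_2$ is exactly your $S$; it then makes the same case split on whether $c_1+c_2^2+\Tr_{q}^{q^n}(\gamma)^{-1}$ vanishes and on the parity of $m$, so your reduction to $T+\Tr_{q}^{q^n}(\gamma)\varphi(T)$ permuting $\F_q$ is just a cleaner packaging of the same computation.
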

\begin{proof}
If $\gamma=0$, then it is clear that $f(X)$ is a permutation polynomial over $\F_{q^n}.$ Therefore, we assume that $\gamma \neq 0.$ Consider $f(X)=a$, for $a \in \F_{q^n},$ that is, 
\begin{equation}\label{e312}
    X+\gamma \Tr_{q}^{q^n}\left(c_1X+c_2X^2+X^2\Tr_{q}^{q^n}(X)\right)=a.
\end{equation}
    Let $\frac{X+a}{\gamma}=\Tr_{q}^{q^n}(c_1X+c_2X^2+X^2\Tr_{q}^{q^n}(X)):=u \in \F_q.$ We put $X=a+\gamma u$ in Equation \eqref{e312} to obtain 
    \begin{equation}\label{e313}
    \begin{split}
          u^3\Tr_{q}^{q^n}(\gamma)^3+u^2\left(c_2\Tr_{q}^{q^n}(\gamma)^2+\Tr_{q}^{q^n}(a)\Tr_{q}^{q^n}(\gamma)^2\right)+u(c_1\Tr_{q}^{q^n}(\gamma)+\Tr_{q}^{q^n}(\gamma)&\Tr_{q}^{q^n}(a)^2+1)+c_1\Tr_{q}^{q^n}(a)+\\& c_2\Tr_{q}^{q^n}(a)^2+\Tr_{q}^{q^n}(a)^3=0.
        \end{split}
    \end{equation}
   Equation $f(X)=a$, for $a \in \F_{q^n}$, has a unique solution if and only if Equation \eqref{e313} has a unique solution in $\F_q$. If $\Tr_{q}^{q^n}(\gamma)=0$, then Equation \eqref{e313} has a unique solution, namely, 
   $$u=c_1\Tr_{q}^{q^n}(a)+c_2\Tr_{q}^{q^n}(a)^2+\Tr_{q}^{q^n}(a)^3.$$
   Hence, $f(X)$ is a permutation polynomial over $\F_{q^n}$ when 
   $\Tr_{q}^{q^n}(\gamma)=0.$
   
   We now assume that $\Tr_{q}^{q^n}(\gamma)\neq 0.$ By substituting 
   $$u=\frac{y+c_2+\Tr_{q}^{q^n}(a)}{\Tr_{q}^{q^n}(\gamma)}$$
   in Equation \eqref{e313}, we have
   \begin{equation*}
       \begin{split}
           (y^2+&(c_2+\Tr_{q}^{q^n}(a))^2)(y+c_2+\Tr_{q}^{q^n}(a))+(y^2+(c_2+\Tr_{q}^{q^n}(a))^2)(c_2+\Tr_{q}^{q^n}(a))+\\&(y+c_2+\Tr_{q}^{q^n}(a))\left(c_1+\Tr_{q}^{q^n}(a)^2+\frac{1}{\Tr_{q}^{q^n}(\gamma)}\right)+c_1\Tr_{q}^{q^n}(a)+c_2\Tr_{q}^{q^n}(a)^2+\Tr_{q}^{q^n}(a)^3=0,
   \end{split}
   \end{equation*}
   which is equivalent to
   \begin{equation}\label{e314}
       y^3+y\left(c_1+{c_2}^2+\frac{1}{\Tr_{q}^{q^n}(\gamma)}\right)+c_1c_2+\frac{c_2}{\Tr_{q}^{q^n}(\gamma)}+\frac{\Tr_{q}^{q^n}(a)}{\Tr_{q}^{q^n}(\gamma)}=0.
   \end{equation}
   Equation \eqref{e313} has a unique solution in $\F_q$ if and only if Equation \eqref{e314} has a unique solution in $\F_q.$ We now divide the proof in the following two cases.

   \textbf{Case 1.} Let $$\Tr_{q}^{q^n}(\gamma)(c_1+{c_2}^2)\neq 1.$$ Since $\Tr_{q}^{q^n}(\cdot)$ is an onto function from $\F_{q^n}$ to $\F_q$, therefore, for $c_2(c_1\Tr_{q}^{q^n}(\gamma)+1) \in \F_q$ there exists $ a \in \F_{q^n}$ such that $$\Tr_{q}^{q^n}(a)=c_2(c_1\Tr_{q}^{q^n}(\gamma)+1).$$ Now, for $a \in \F_{q^3}$ such that  $\Tr_{q}^{q^n}(a)=c_2(c_1\Tr_{q}^{q^n}(\gamma)+1)$, we have two solutions of Equation \eqref{e314}. Hence for $a \in \F_{q^3}$ with $\Tr_{q}^{q^n}(a)=c_2(c_1\Tr_{q}^{q^n}(\gamma)+1)$, $f(X)=a$ has more then one solutions. Consequently, $f(X)$ is not a permutation polynomial over $\F_{q^n}.$

   \textbf{Case 2.} If $$\Tr_{q}^{q^n}(\gamma)(c_1+{c_2}^2)=1,$$ then Equation \eqref{e314} reduces to 
   \begin{equation}\label{e315}
       y^3+c_1c_2+\frac{c_2}{\Tr_{q}^{q^n}(\gamma)}+\frac{\Tr_{q}^{q^n}(a)}{\Tr_{q}^{q^n}(\gamma)}=0.
   \end{equation}
   If $m$ is odd, then Equation \eqref{e315} has a unique solution as $\gcd(3, q-1)=1.$ If $m$ is even, then there exist $a \in \F_{q^n}$ such that Equation  \eqref{e315} has three distinct solutions as $\gcd(3, q-1)=3.$ Therefore, in this case $f(X)$ is a permutation polynomial over $\F_{q^n}$ if $m$ is odd.

   \end{proof}
   The significance of the above theorem is further illustrated by the fact that it encompasses several classes of permutation polynomials constructed in \cite{JYLQ}, as highlighted in the following remark.
\begin{rmk}
    In the case of $n=2$, Theorem \ref{T36} includes the following lemmas as special cases:
    \begin{enumerate}
        \item Lemma \ref{L26} \cite[Theorem 3.1]{JYLQ}, corresponding to $c_1=c_2=0$.
        \item Lemma \ref{L27}\cite[Theorem 3.3]{JYLQ}, corresponding to $c_1=c_2=1$.
        \item Lemma \ref{L28}\cite[Theorem 3.5]{JYLQ}, corresponding to $c_1=1, c_2=0$.
        \item Lemma \ref{L29}\cite[Theorem 3.6]{JYLQ}, corresponding to $c_1=0, c_2=1$.
    \end{enumerate}
\end{rmk}
Next, we determine the compositional inverse of
\[
f(X)=X+\gamma \Tr_{q}^{q^n}\left(c_1X+c_2X^2+X^2\Tr_{q}^{q^n}(X)\right)\in \mathbb{F}_{q^n}[X],
\]
where \(c_1,c_2\in\mathbb{F}_q\) are as defined in Theorem \ref{T36}. Before providing the compositional inverse, we first establish the following lemmas.

\begin{lem}\label{L41}
    Let $q=2^m$ and $f(X)=X+\gamma \Tr_{q}^{q^n}(c_1X+c_2X^2+X^2\Tr_{q}^{q^n}(X))\in \F_{q^n}[X]$, where $c_1, c_2 \in \F_q$ such that  $\Tr_{q}^{q^n}(\gamma)(c_1+c_2^2)=1$. Then $$\Tr_{q}^{q^n}((1+\gamma)X) \circ f(X)=\Tr_{q}^{q^n}((1+\gamma)X)+\frac{c_1+c_2^2+1}{(c_1+c_2^2)^2}\left(\frac{f(X)+X}{\gamma}\right).$$
\end{lem}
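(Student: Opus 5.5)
The plan is a direct computation that uses only additivity and $\F_q$-linearity of $\Tr_{q}^{q^n}$ and the hypothesis $\Tr_{q}^{q^n}(\gamma)(c_1+c_2^2)=1$. No permutation property of $f$ is needed. Set
\[
u:=\Tr_{q}^{q^n}\left(c_1X+c_2X^2+X^2\Tr_{q}^{q^n}(X)\right)\in\F_q,
\]
so that $f(X)=X+\gamma u$. Since we are in characteristic $2$, $f(X)+X=\gamma u$. Hence $\frac{f(X)+X}{\gamma}=u$ (with $\gamma\neq 0$, which the hypothesis forces since $\Tr_{q}^{q^n}(\gamma)\neq 0$).

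First I will expand the left-hand side:
\[
\Tr_{q}^{q^n}\big((1+\gamma)f(X)\big)=\Tr_{q}^{q^n}\big((1+\gamma)X\big)+\Tr_{q}^{q^n}\big((1+\gamma)\gamma u\big).
\]
Because $u\in\F_q$, the second term equals $u\,\Tr_{q}^{q^n}(\gamma+\gamma^2)$. Since squaring commutes with the trace in characteristic $2$, $\Tr_{q}^{q^n}(\gamma^2)=\Tr_{q}^{q^n}(\gamma)^2$. Writing $T=\Tr_{q}^{q^n}(\gamma)$, this gives
\[
\Tr_{q}^{q^n}\big((1+\gamma)X\big)\circ f(X)=\Tr_{q}^{q^n}\big((1+\gamma)X\big)+(T+T^2)\,u .
\]

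Next I will match the coefficient of $u$ with the one in the statement. Put $c=c_1+c_2^2$; the hypothesis says $cT=1$, so $c\neq 0$ and $T=1/c$. Then
\[
T+T^2=\frac{1}{c}+\frac{1}{c^2}=\frac{c+1}{c^2}=\frac{c_1+c_2^2+1}{(c_1+c_2^2)^2}.
\]
Substituting $u=\frac{f(X)+X}{\gamma}$ gives exactly the claimed identity.

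There is no genuine obstacle here. The only points needing care are that $u\in\F_q$ can be pulled out of the trace, and that $\Tr_{q}^{q^n}(\gamma^2)=\Tr_{q}^{q^n}(\gamma)^2$ because the Frobenius $x\mapsto x^2$ commutes with $x\mapsto x^{q^i}$.
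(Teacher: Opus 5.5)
Your proposal is correct and follows essentially the same route as the paper. You pull $u=\Tr_{q}^{q^n}(c_1X+c_2X^2+X^2\Tr_{q}^{q^n}(X))\in\F_q$ out of the trace and rewrite $\Tr_{q}^{q^n}(\gamma+\gamma^2)=\frac{1}{c_1+c_2^2}+\frac{1}{(c_1+c_2^2)^2}$. You also make explicit two points the paper leaves implicit: that $\gamma\neq 0$, and that $\Tr_{q}^{q^n}(\gamma^2)=\Tr_{q}^{q^n}(\gamma)^2$.
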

\begin{proof}
    We have 
    \begin{equation*}
    \begin{split}
        \Tr_{q}^{q^n}((1+\gamma)X) \circ f(X)=&\Tr_{q}^{q^n}((1+\gamma)(X+\gamma\Tr_{q}^{q^n}(c_1X+c_2X^2+X^2\Tr_{q}^{q^n}(X))))
        \\=& \Tr_{q}^{q^n}((1+\gamma)X)+\Tr_{q}^{q^n}(c_1X+c_2X^2+X^2\Tr_{q}^{q^n}(X))\Tr_{q}^{q^n}(\gamma+\gamma^2)
        \\=&\Tr_{q}^{q^n}((1+\gamma)X)+\Tr_{q}^{q^n}(c_1X+c_2X^2+X^2\Tr_{q}^{q^n}(X))\left(\frac{1}{c_1+c_2^2}+\frac{1}{(c_1+c_2^2)^2}\right)
        \\=&\Tr_{q}^{q^n}((1+\gamma)X)+\frac{c_1+c_2^2+1}{(c_1+c_2^2)^2}\left(\frac{f(X)+X}{\gamma}\right).
    \end{split}
         \end{equation*}
\end{proof}

\begin{lem}\label{L42}
     Let $q=2^m$ and $f(X)=X+\gamma \Tr_{q}^{q^n}(c_1X+c_2X^2+X^2\Tr_{q}^{q^n}(X))\in \F_{q^n}[X]$, where $c_1, c_2 \in \F_q$ such that  $\Tr_{q}^{q^n}(\gamma)(c_1+c_2^2)=1$. Then
     $$\left((c_1+c_2^2)\Tr_{q}^{q^n}(X)+c_2^3\right) \circ f(X)=(\Tr_{q}^{q^n}(X)+c_2)^3.$$
\end{lem}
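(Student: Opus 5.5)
The plan is a direct computation. The key observation is that every trace appearing in the statement collapses to a polynomial in the single quantity $t:=\Tr_{q}^{q^n}(X)\in\F_q$. After that, the hypothesis $\Tr_{q}^{q^n}(\gamma)(c_1+c_2^2)=1$ cancels the only $\gamma$-dependent factor, and a characteristic-$2$ binomial expansion finishes the proof.

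\textbf{Step 1: collapse the inner trace.} Write $g:=\Tr_{q}^{q^n}(\gamma)$ and $k:=c_1+c_2^2$, so that $gk=1$ by hypothesis. Put $u:=\Tr_{q}^{q^n}(c_1X+c_2X^2+X^2\Tr_{q}^{q^n}(X))$. Since $c_1,c_2,t\in\F_q$, $\Tr_{q}^{q^n}$ is $\F_q$-linear, and squaring commutes with the trace (Frobenius in characteristic $2$), we get $\Tr_{q}^{q^n}(X^2)=t^2$ and $\Tr_{q}^{q^n}(X^2t)=t\cdot t^2=t^3$. Hence
\[
u=c_1t+c_2t^2+t^3 .
\]

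\textbf{Step 2: compute the trace of $f(X)$.} Because $u\in\F_q$, linearity gives
\[
\Tr_{q}^{q^n}(f(X))=\Tr_{q}^{q^n}(X)+u\,\Tr_{q}^{q^n}(\gamma)=t+gu .
\]
Therefore
\[
\bigl(k\Tr_{q}^{q^n}(X)+c_2^3\bigr)\circ f(X)=kt+kgu+c_2^3=kt+u+c_2^3 ,
\]
where the last equality uses $kg=1$.

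\textbf{Step 3: recognize the cube.} Substituting Step 1 gives
\[
(c_1+c_2^2)t+c_1t+c_2t^2+t^3+c_2^3 .
\]
The two $c_1t$ terms cancel in characteristic $2$, leaving $t^3+c_2t^2+c_2^2t+c_2^3$. In characteristic $2$ the binomial coefficients $\binom{3}{1}=\binom{3}{2}=3$ reduce to $1$, so this is exactly $(t+c_2)^3=(\Tr_{q}^{q^n}(X)+c_2)^3$, which is the claim.

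No real obstacle is expected. The only point needing care is Step 1: one must justify $\Tr_{q}^{q^n}(X^2\Tr_{q}^{q^n}(X))=\Tr_{q}^{q^n}(X)^3$ by pulling the $\F_q$-scalar $t$ out of the trace, and $\Tr_{q}^{q^n}(X^2)=\Tr_{q}^{q^n}(X)^2$ by using that Frobenius commutes with the trace.
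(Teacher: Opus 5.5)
Your proposal is correct and matches the paper's proof step for step. Both use $\F_q$-linearity to write $\Tr_{q}^{q^n}(f(X))=\Tr_{q}^{q^n}(X)+\Tr_{q}^{q^n}(\gamma)\,u$, cancel the factor via $\Tr_{q}^{q^n}(\gamma)(c_1+c_2^2)=1$, expand $u=c_1t+c_2t^2+t^3$, and recognize $(t+c_2)^3$ in characteristic $2$; you only spell out the steps $\Tr_{q}^{q^n}(X^2)=\Tr_{q}^{q^n}(X)^2$ and pulling $t\in\F_q$ out of the trace, which the paper leaves implicit.
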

\begin{proof}
    Here,
    \begin{equation*}
        \begin{split}
            \left((c_1+c_2^2)\Tr_{q}^{q^n}(X)+c_2^3\right) \circ f(X)=&(c_1+c_2^2)\Tr_{q}^{q^n}(f(X))+c_2^3
            \\=&(c_1+c_2^2)\Tr_{q}^{q^n}(X+\gamma \Tr_{q}^{q^n}(c_1X+c_2X^2+X^2\Tr_{q}^{q^n}(X)))+c_2^3
            \\=&(c_1+c_2^2)\left(\Tr_{q}^{q^n}(X)+\frac{\Tr_{q}^{q^n}(c_1X+c_2X^2+X^2\Tr_{q}^{q^n}(X))}{c_1+c_2^2}\right)+c_2^3
            \\=&c_2^2\Tr_{q}^{q^n}(X)+c_2\Tr_{q}^{q^n}(X)^2+\Tr_{q}^{q^n}(X)^3+c_2^3
            \\=&(\Tr_{q}^{q^n}(X)+c_2)^3.
        \end{split}
    \end{equation*}
\end{proof}
\begin{lem}\label{L43}
     Let $q=2^m$ and $f(X)=X+\gamma \Tr_{q}^{q^n}(c_1X+c_2X^2+X^2\Tr_{q}^{q^n}(X))\in \F_{q^n}[X]$, where $c_1, c_2 \in \F_q$. Then
     $$\left(\displaystyle\sum_{\substack{i=1\\ i\neq j}}^{n-1}\sum_{j=1}^{n-1}\gamma^{q^i}X^{q^j}\right) \circ f(X)=\displaystyle\sum_{\substack{i=1\\ i\neq j}}^{n-1}\sum_{j=1}^{n-1}\gamma^{q^i}X^{q^j}.$$
\end{lem}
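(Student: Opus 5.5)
The plan is to use only the facts that the operator on the left-hand side is additive in characteristic $2$, and that $f(X)$ differs from $X$ by $\gamma$ times an element of $\F_q$. The precise form of $h$ will not matter. I read the double sum as running over all ordered pairs $(i,j)$ with $1\le i,j\le n-1$ and $i\neq j$. Set
\[
L(X)=\sum_{\substack{i=1\\ i\neq j}}^{n-1}\sum_{j=1}^{n-1}\gamma^{q^i}X^{q^j}.
\]
Every monomial of $L$ is of the form $cX^{q^j}$, so $L$ is a $q$-polynomial. Hence $L(x+y)=L(x)+L(y)$ for all $x,y\in\F_{q^n}$, and $L(\lambda x)=\lambda L(x)$ for every $\lambda\in\F_q$.

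Write $t=\Tr_{q}^{q^n}\left(c_1X+c_2X^2+X^2\Tr_{q}^{q^n}(X)\right)$. Since $c_1,c_2\in\F_q$, the argument of the outer trace lies in $\F_{q^n}$, so $t\in\F_q$. Then $f(X)=X+\gamma t$, and additivity gives
\[
L(f(X))=L(X)+L(\gamma t).
\]
Because $t^{q^j}=t$, we have $L(\gamma t)=t\,L(\gamma)$, where
\[
L(\gamma)=\sum_{i\neq j}\gamma^{q^i}\gamma^{q^j}=\sum_{i\neq j}\gamma^{q^i+q^j}.
\]

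The key step is to show that $L(\gamma)=0$. The summand $\gamma^{q^i+q^j}$ is symmetric in $(i,j)$, and the index set of ordered pairs with $i\neq j$ is invariant under the swap $(i,j)\mapsto(j,i)$, which has no fixed points on it. So the terms split into pairs of equal terms, and each pair sums to $2\gamma^{q^i+q^j}=0$ in characteristic $2$. Thus $L(\gamma)=0$, so $L(f(X))=L(X)$, which is the claim.

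There is no real obstacle here. The only care needed is to justify additivity of $L$ and the fact that $t\in\F_q$, both of which are immediate. Note that the hypothesis $\Tr_{q}^{q^n}(\gamma)(c_1+c_2^2)=1$ is not needed for this lemma, consistent with its statement.
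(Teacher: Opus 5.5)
Your proof is correct and follows essentially the same route as the paper: write $f(X)^{q^j}=X^{q^j}+\gamma^{q^j}t$ using $t\in\F_q$, pull $t$ out of the double sum, and use $\sum_{i\neq j}\gamma^{q^i+q^j}=0$. Your fixed-point-free pairing $(i,j)\mapsto(j,i)$ in characteristic $2$ justifies this vanishing, which the paper simply asserts.
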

\begin{proof}
    We can see 
    \begin{equation*}
        \begin{split}
            \left(\displaystyle\sum_{\substack{i=1\\ i\neq j}}^{n-1}\sum_{j=1}^{n-1}\gamma^{q^i}X^{q^j}\right) \circ f(X)=&\displaystyle\sum_{\substack{i=1\\ i\neq j}}^{n-1}\sum_{j=1}^{n-1}\gamma^{q^i}f(X)^{q^j}
            \\=&\displaystyle\sum_{\substack{i=1\\ i\neq j}}^{n-1}\sum_{j=1}^{n-1}\gamma^{q^i}(X+\gamma \Tr_{q}^{q^n}(c_1X+c_2X^2+X^2\Tr_{q}^{q^n}(X)))^{q^j}
            \\=&\displaystyle\sum_{\substack{i=1\\ i\neq j}}^{n-1}\sum_{j=1}^{n-1}\gamma^{q^i}\left(X^{q^j}+\Tr_{q}^{q^n}(c_1X+c_2X^2+X^2\Tr_{q}^{q^n}(X))\gamma^{q^j}\right)
            \\=&\displaystyle\sum_{\substack{i=1\\ i\neq j}}^{n-1}\sum_{j=1}^{n-1}\gamma^{q^i}X^{q^j}+\Tr_{q}^{q^n}(c_1X+c_2X^2+X^2\Tr_{q}^{q^n}(X))\displaystyle\sum_{\substack{i=1\\ i\neq j}}^{n-1}\sum_{j=1}^{n-1}\gamma^{q^i+q^j}
            \\=&\displaystyle\sum_{\substack{i=1\\ i\neq j}}^{n-1}\sum_{j=1}^{n-1}\gamma^{q^i}X^{q^j}
        \end{split}
    \end{equation*}
    as $\displaystyle\sum_{\substack{i=1\\ i\neq j}}^{n-1}\sum_{j=1}^{n-1}\gamma^{q^i+q^j}=0.$
\end{proof}
In the following proposition, we give the compositional inverse of $f(X)$ defined in Theorem \ref{T36}.
\begin{prop}\label{P41}
     Let $q=2^m$, where $m$ is a positive integer. Let $f(X)=X+\gamma \Tr_{q}^{q^n}(c_1X+c_2X^2+X^2\Tr_{q}^{q^n}(X))\in \F_{q^n}[X]$, where $c_1, c_2 \in \F_q$. Then
     \[
     f^{-1}(X)=
     \begin{cases}
         f(X) & \text{if } \Tr_{q}^{q^n}(\gamma)=0,\\
         g(X) & \text{if } \Tr_{q}^{q^n}(\gamma)(c_1+c_2^2)=1  \text{and $m$ is odd}, 
     \end{cases}
     \]
     where 
     \begin{equation*}
     \begin{split}
         g(X)=&\Tr_{q}^{q^n}((1+\gamma)X)+\gamma\left((c_1+c_2^2)\Tr_{q}^{q^n}(X)+c_2^3\right)^t+\gamma c_2+\displaystyle\sum_{\substack{i=1\\ i\neq j}}^{n-1}\sum_{j=1}^{n-1}\gamma^{q^i}X^{q^j}+\\
     &\sum_{j=1}^{n-1}\left(\frac{1}{c_1+c_2^2}+1\right)X^{q^j}+\left(\left(\frac{1}{c_1+c_2^2}+1\right)\sum_{j=1}^{n-1}\gamma^{q^j}+\frac{c_1+c_2^2+1}{(c_1+c_2^2)^2}\right)\\
     &\left((c_1+c_2^2)\left((c_1+c_2^2)\Tr_{q}^{q^n}(X)+c_2^3\right)^t+(c_1+c_2^2)\Tr_{q}^{q^n}(X)+c_2^3+c_1c_2\right)
     \end{split}
     \end{equation*}
     such that $3t \equiv 1 \mod(2^m-1).$
      
\end{prop}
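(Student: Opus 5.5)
The proof splits into the two cases of the statement. In each case I will verify $f^{-1}\circ f=\mathrm{id}$ directly; this suffices because $f$ is already known to be a permutation polynomial by Theorem \ref{T36}. Throughout, set $u:=\Tr_{q}^{q^n}(c_1X+c_2X^2+X^2\Tr_{q}^{q^n}(X))\in\F_q$, so that $f(X)=X+\gamma u$.

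\textbf{Case $\Tr_{q}^{q^n}(\gamma)=0$.} I will show that $f$ is an involution.
\begin{itemize}
\item Since $\Tr_{q}^{q^n}(\gamma)=0$, we have $\Tr_{q}^{q^n}(X+\gamma u)=\Tr_{q}^{q^n}(X)$.
\item In characteristic two, $(X+\gamma u)^2=X^2+\gamma^2u^2$.
\item Therefore
\[
\Tr_{q}^{q^n}\bigl(h(X+\gamma u)\bigr)=u+u\bigl(c_1\Tr_{q}^{q^n}(\gamma)+c_2u\Tr_{q}^{q^n}(\gamma)^2+u\Tr_{q}^{q^n}(\gamma)^2\Tr_{q}^{q^n}(X)\bigr).
\]
Here I use $\Tr_{q}^{q^n}(\gamma^2)=\Tr_{q}^{q^n}(\gamma)^2$ and that $u\in\F_q$. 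Every term in the bracket carries a factor $\Tr_{q}^{q^n}(\gamma)=0$, so the right-hand side equals $u$.
\item Hence $f(f(X))=X+\gamma u+\gamma u=X$.
\end{itemize}

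\textbf{Case $\Tr_{q}^{q^n}(\gamma)(c_1+c_2^2)=1$, $m$ odd.} The idea is to recover $X$ from $Y=f(X)$ in three steps.
\begin{enumerate}
\item By Lemma \ref{L42}, $(c_1+c_2^2)\Tr_{q}^{q^n}(Y)+c_2^3=(\Tr_{q}^{q^n}(X)+c_2)^3$. Since $m$ is odd, $\gcd(3,q-1)=1$, so cubing is a bijection on $\F_q$ with inverse $z\mapsto z^t$ where $3t\equiv 1\pmod{q-1}$. Thus
\[
\Tr_{q}^{q^n}(X)=\bigl((c_1+c_2^2)\Tr_{q}^{q^n}(Y)+c_2^3\bigr)^t+c_2 .
\]
\item From $\Tr_{q}^{q^n}(Y)=\Tr_{q}^{q^n}(X)+u\Tr_{q}^{q^n}(\gamma)$ we get $u=(c_1+c_2^2)\bigl(\Tr_{q}^{q^n}(Y)+\Tr_{q}^{q^n}(X)\bigr)$.
\item Then $X=Y+\gamma u$, which already gives the inverse in compact form.
\end{enumerate}
The remaining task is to show that the stated $g$ agrees with this expression.

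To do this I will compute $g(f(X))$ term by term using the preceding lemmas:
\begin{itemize}
\item Lemma \ref{L41} gives $\Tr_{q}^{q^n}((1+\gamma)f(X))=\Tr_{q}^{q^n}((1+\gamma)X)+\frac{c_1+c_2^2+1}{(c_1+c_2^2)^2}\,u$.
\item Lemma \ref{L42}, together with the $t$-th power, turns each bracket $\bigl((c_1+c_2^2)\Tr_{q}^{q^n}(f(X))+c_2^3\bigr)^t$ into $\Tr_{q}^{q^n}(X)+c_2$.
\item Lemma \ref{L43} shows that the double sum $\sum\sum\gamma^{q^i}X^{q^j}$ is unchanged under composition with $f$.
\item Directly, $\sum_{j=1}^{n-1}f(X)^{q^j}=\sum_{j}X^{q^j}+u\sum_{j}\gamma^{q^j}$.
\item The un-powered $(c_1+c_2^2)\Tr_{q}^{q^n}(f(X))+c_2^3$ becomes $(\Tr_{q}^{q^n}(X)+c_2)^3$. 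Expanding this and adding $c_1c_2$ should collapse the last bracket to a multiple of $u$.
\end{itemize}
After these substitutions, everything should reduce to expressions in $X$, $\Tr_{q}^{q^n}(X)$ and $u$. I would then use two relations to finish:
\begin{itemize}
\item $\sum_{j=1}^{n-1}X^{q^j}=\Tr_{q}^{q^n}(X)+X$;
\item $\Tr_{q}^{q^n}(\gamma)=1/(c_1+c_2^2)$, which yields $\Tr_{q}^{q^n}(\gamma^2)$ and $\sum_j\gamma^{q^j}=\Tr_{q}^{q^n}(\gamma)+\gamma$.
\end{itemize}
The goal is to show that all terms other than $X$ cancel in pairs: the $\Tr_{q}^{q^n}(X)$ terms, the $c_2$ terms, and the $u$-multiples coming from the various coefficients.

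The main obstacle is this final bookkeeping: confirming that the coefficient of $u$, collected from the Lemma \ref{L41} term, the $\sum_j X^{q^j}$ term and the last product, vanishes. If the direct collection does not close up, my fallback is to prove $g(Y)=Y+\gamma u$ directly, expressing $g$ entirely through $Y$, $\Tr_{q}^{q^n}(Y)$ and the recovered $\Tr_{q}^{q^n}(X)$ from step 1. In that form the check becomes a polynomial identity in a few $\F_q$-quantities plus $Y$, which is easier to verify.
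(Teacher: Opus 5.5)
Your proposal is correct and is essentially the paper's proof. You show $f\circ f=\mathrm{id}$ when $\Tr_{q}^{q^n}(\gamma)=0$, and otherwise evaluate $g(f(X))$ term by term with Lemmas \ref{L41}--\ref{L43}, where the last bracket collapses exactly to $u$.

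The bookkeeping you worry about does close. The $u$-terms cancel in pairs in characteristic $2$. The coefficient of each $X^{q^j}$, $1\le j\le n-1$, reduces to $\Tr_{q}^{q^n}(\gamma)+\frac{1}{c_1+c_2^2}=0$ via $\sum_{1\le i\le n-1,\, i\ne j}\gamma^{q^i}=\Tr_{q}^{q^n}(\gamma)+\gamma+\gamma^{q^j}$. Indeed, $g$ simplifies to your compact inverse $Y+\gamma(c_1+c_2^2)\bigl(\Tr_{q}^{q^n}(Y)+((c_1+c_2^2)\Tr_{q}^{q^n}(Y)+c_2^3)^t+c_2\bigr)$.
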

\begin{proof}
    We first assume that $\Tr_{q}^{q^n}(\gamma)=0.$ Let $\alpha=\gamma \Tr_{q}^{q^n}(c_1X+c_2X^2+X^2\Tr_{q}^{q^n}(X))$ and therefore
  \begin{equation*}
 \begin{split}
      f(f(X))&=X+\alpha+\gamma\Tr_{q}^{q^n}(c_1(X+\alpha)+c_2(X+\alpha)^2+(X+\alpha)^2\Tr_{q}^{q^n}(X+\alpha))\\
      &=X+\alpha+\gamma \Tr_{q}^{q^n}(c_1X+c_1\alpha +c_2X^2+c_2\alpha^2+(X^2+\alpha^2)( \Tr_{q}^{q^n}(X)+ \Tr_{q}^{q^n}(\alpha)))
      \\&=X+\alpha+\gamma c_1 \Tr_{q}^{q^n}(X)+\gamma c_2 \Tr_{q}^{q^n}(X^2)+\gamma \Tr_{q}^{q^n}(X^2)\Tr_{q}^{q^n}(X)
      \end{split}
  \end{equation*}
as $\Tr_{q}^{q^n}(\alpha)=0$. Hence, $f(f(X))=X.$

  Next, we assume that $\Tr_{q}^{q^n}(\gamma)(c_1+c_2^2)=1$ and $m$ is odd. We have
  \begin{equation}\label{e410}
      g(f(X))=A+B+C+D,
  \end{equation} where
  \begin{equation*}
      \begin{split}
          A=&\Tr_{q}^{q^n}((1+\gamma)f(X)),
          \\B=&\gamma\left((c_1+c_2^2)\Tr_{q}^{q^n}(f(X))+c_2^3\right)^t+\gamma c_2,
          \\C=&\displaystyle\sum_{\substack{i=1\\ i\neq j}}^{n-1}\sum_{j=1}^{n-1}\gamma^{q^i}f(X)^{q^j}+\sum_{j=1}^{n-1}\left(\frac{1}{c_1+c_2^2}+1\right)f(X)^{q^j}, \text{ and }
          \\D=&\left(\left(\frac{1}{c_1+c_2^2}+1\right)\sum_{j=1}^{n-1}\gamma^{q^j}+\frac{c_1+c_2^2+1}{(c_1+c_2^2)^2}\right)\left((c_1+c_2^2)\left((c_1+c_2^2)\Tr_{q}^{q^n}(f(X))+c_2^3\right)^t+\right.\\&\left.(c_1+c_2^2)\Tr_{q}^{q^n}(f(X))+c_2^3+c_1c_2\right).
      \end{split}
  \end{equation*}
  Using Lemma \ref{L41}, Lemma \ref{L42} and Lemma \ref{L43}, we get
  \begin{equation*}
      \begin{split}
          A=&\Tr_{q}^{q^n}((1+\gamma)X)+\frac{c_1+c_2^2+1}{(c_1+c_2^2)^2}\left(\Tr_{q}^{q^n}(c_1X+c_2X^2+X^2\Tr_{q}^{q^n}(X))\right),
          \\B=& \gamma \Tr_{q}^{q^n}(X),
          \\C=&\displaystyle\sum_{\substack{i=1\\ i\neq j}}^{n-1}\sum_{j=1}^{n-1}\gamma^{q^i}X^{q^j}+\sum_{j=1}^{n-1}\left(\frac{1}{c_1+c_2^2}+1\right)(X^{q^j}+\gamma^{q^j}\Tr_{q}^{q^n}(c_1X+c_2X^2+X^2\Tr_{q}^{q^n}(X)),  \text{ and }
          \\D=&\left(\left(\frac{1}{c_1+c_2^2}+1\right)\sum_{j=1}^{n-1}\gamma^{q^j}+\frac{c_1+c_2^2+1}{(c_1+c_2^2)^2}\right)\Tr_{q}^{q^n}(c_1X+c_2X^2+X^2\Tr_{q}^{q^n}(X)).
      \end{split}
  \end{equation*}
  We now put the values of $A$, $B$, $C$ and $D$ in Equation \ref{e410}. Consequently, we obtain
  \begin{equation*}
      \begin{split}
          g(f(X))=&\Tr_{q}^{q^n}((1+\gamma)X)+\gamma \Tr_{q}^{q^n}(X)+\displaystyle\sum_{\substack{i=1\\ i\neq j}}^{n-1}\sum_{j=1}^{n-1}\gamma^{q^i}X^{q^j}+\sum_{j=1}^{n-1}\left(\frac{1}{c_1+c_2^2}+1\right)X^{q^j}
          \\=&\Tr_{q}^{q^n}(X)+\Tr_{q}^{q^n}(\gamma X)+\gamma \Tr_{q}^{q^n}(X)+\displaystyle\sum_{\substack{i=1\\ i\neq j}}^{n-1}\sum_{j=1}^{n-1}\gamma^{q^i}X^{q^j}+\sum_{j=1}^{n-1}\left(\frac{1}{c_1+c_2^2}+1\right)X^{q^j}
          \\=&X+\sum_{j=1}^{n-1}(1+\gamma+\gamma^{q^j})X^{q^j}+\displaystyle\sum_{\substack{i=1\\ i\neq j}}^{n-1}\sum_{j=1}^{n-1}\gamma^{q^i}X^{q^j}+\sum_{j=1}^{n-1}\left(\frac{1}{c_1+c_2^2}+1\right)X^{q^j}
          \\=&X+\sum_{j=1}^{n-1}\left(\gamma+\gamma^{q^j}+\frac{1}{c_1+c_2^2}\right)X^{q^j}+\displaystyle\sum_{\substack{i=1\\ i\neq j}}^{n-1}\sum_{j=1}^{n-1}\gamma^{q^i}X^{q^j}
          \\=&X
      \end{split}
  \end{equation*}
  as $\Tr_{q}^{q^n}(\gamma)=\frac{1}{c_1+c_2^2}.$ Hence, $g(X)$ is the compositional inverse of $f(X).$
  \end{proof}
  \begin{rmk}
      For $n=2$, Proposition \ref{P41} yields Theorems 3.1--3.4 of \cite{SKP} as special cases.
  \end{rmk}

\end{document}